\documentclass[a4paper,12pt]{article}
    \usepackage[top=2cm,bottom=2cm,left=2.5cm,right=2.5cm]{geometry}
    \usepackage{cite, amsmath, amssymb}
    \usepackage[margin=0.5cm,%
                font=small,%
                format=hang,%
                labelsep=period,%
                labelfont=bf]{caption}
    \pagestyle{empty}
\usepackage[latin1]{inputenc}
\usepackage{amsmath}
\usepackage{amsfonts}
\usepackage{amssymb}
\usepackage{cite}
\usepackage{amsthm}
\usepackage{makeidx}
\usepackage{graphicx}
\usepackage{mathrsfs,xcolor}
\usepackage{enumerate}
\usepackage{blkarray}
\usepackage{bm}
\usepackage{setspace}
\usepackage{float}
\usepackage{authblk}
\usepackage{multirow}
\usepackage{booktabs}
\usepackage[ruled,vlined]{algorithm2e}
\usepackage{blkarray}
\usepackage{authblk}
\usepackage{multirow}
\usepackage{lineno}

\usepackage{fancyhdr}
\usepackage{lastpage}

\pagestyle{fancy}
\fancyhf{}

\rfoot{Page \thepage \hspace{1pt} of \pageref{LastPage}}

\numberwithin{table}{section}
\numberwithin{equation}{section}

\theoremstyle{plain}
\newtheorem{theorem}{Theorem}[section]
\newtheorem{proposition}[theorem]{Proposition}
\newtheorem{definition}[theorem]{Definition}
\newtheorem{lemma}[theorem]{Lemma}
\newtheorem{example}[theorem]{Example}

\newtheorem{corollary}[theorem]{Corollary}
\newtheorem{remark}[theorem]{Remark}

\usepackage{authblk}
\author[1,2]{ \textbf{Bryan S. Hernandez}}
\author[3,4,5,6]{\textbf{Eduardo R. Mendoza}}

\affil[1]{\small \textit{Biomedical Mathematics Group, Pioneer Research Center for Mathematical and Computational Sciences, Institute for Basic Science, Daejeon 34126, Republic of Korea}}
\affil[2]{\small \textit{Institute of Mathematics, University of the Philippines Diliman, Quezon City 1101, Philippines}}
\affil[3]{\small \textit{Mathematics and Statistics Department, De La Salle University, Manila  0922, Philippines}}
\affil[4]{\small \textit{Center for Natural Sciences and Environmental Research, De La Salle University, Manila 0922, Philippines}}
\affil[5]{\small \textit{Max Planck Institute of Biochemistry, Martinsried, Munich, Germany}}
\affil[6]{\small \textit{LMU Faculty of Physics, Geschwister -Scholl- Platz 1, 80539 Munich, Germany}}
\affil[*]{Email addresses: \texttt{bryan.hernandez@upd.edu.ph} and \texttt{eduardo.mendoza@dlsu.edu.ph}}

\title{\textbf{Positive Equilibria of Power Law Kinetics on Networks with Independent Linkage Classes}}
\date{}

\begin{document}
\maketitle
\begin{abstract} 
        Studies about the set of positive equilibria ($E_+$) of kinetic systems have been focused on mass action, and not that much on power law kinetic (PLK) systems, even for PL-RDK systems (PLK systems where two reactions with identical reactant complexes have the same kinetic order vectors).
        For mass action, reactions with different reactants have different kinetic order rows. A PL-RDK system satisfying this property is called factor span surjective (PL-FSK).
		In this work, we show that a cycle terminal PL-FSK system with $E_+\ne \varnothing$ and has independent linkage classes (ILC) is a poly-PLP system, i.e., $E_+$ is the disjoint union of log-parametrized sets.
		The key insight for the extension is that factor span surjectivity induces an isomorphic digraph structure on the kinetic complexes. The result also completes, for ILC networks, the structural analysis of the original complex balanced generalized mass action systems (GMAS) by M\"uller and Regensburger.
		We also identify a large set of PL-RDK systems where non-emptiness of $E_+$ is a necessary and sufficient condition for non-emptiness of each set of positive equilibria for each linkage class. These results extend those of Boros on mass action systems with ILC.
        We conclude this paper with two applications of our results. Firstly, we consider absolute complex balancing (ACB), i.e., the property that each positive equilibrium is complex balanced, in poly-PLP systems.
		Finally, we use the new results to study absolute concentration robustness (ACR) in these systems. In particular, we obtain a species hyperplane containment criterion to determine ACR in the system species.\\ \\
	{\bf{Keywords:}} {chemical reaction networks, power law kinetics, positive equilibria, cycle terminal networks, factor span surjective systems, independent linkage classes, absolute complex balancing, absolute concentration robustness}
	
\end{abstract}

\thispagestyle{empty}
	\section{Introduction}
	
	
	A power law kinetic system $(\mathcal{N}, K)$ consists of a chemical reaction network $\mathcal{N}$ and a kinetics $K$ which assigns to each reaction $q$ a function $K_q$ of the form 
	$K_q(x) = {k_q}\prod\limits_{i=1}^{m} {{x_i}^{{F_{qi}}}} $ for $q = 1,\ldots,r$
	with $k_q \in \mathbb{R}_{>0}$, $F_{qi} \in \mathbb{R}$ and $x \in \Omega$, a subset of $\mathbb{R}^m_{\ge 0}$ containing $\mathbb{R}^m_{> 0}$. The $r \times m$ matrix $[F_{qi}]$ is called the system's kinetic order matrix $F$ and each $k_q$ is a rate constant. Mass action systems can be viewed as a subset of the power law kinetic (PLK) systems, with the kinetic order matrix consisting of the stoichiometric coefficients of each reaction's reactant complex. PLK systems in which, as in mass action systems, branching reactions of a reactant complex have identical rows in the kinetic order matrix form the subset of PL-RDK systems (RDK = reactant determined kinetic orders) systems, its (non-empty) complement is denoted by PL-NDK. A mass action system also has the additional property that reactions with different reactants have different kinetic order rows -- a PL-RDK system with this property is called a factor span surjective (denoted by PL-FSK) system.
	
	The study of power law kinetic systems in Chemical Reaction Network Theory (CRNT) was revived by S. M\"uller and G. Regensburger in 2012 with the novel, more geometric concepts of generalized mass action systems (GMAS) and kinetic deficiency \cite{MULL2012}. In a GMAS, the rows of the kinetic order matrix of a PLK system $(\mathcal{N}, K)$ are called kinetic complexes and collected into a set $\widetilde{\mathcal{C}}$, which for a weakly reversible network $\mathcal{N}$ is the image of a bijective map $\widetilde{\phi}:\mathcal{C} \to \widetilde{\mathcal{C}}$. The requirement of $\widetilde{\phi}$ being a surjective map is equivalent to the system being PL-RDK and injectivity to its factor span surjectivity (i.e. in PL-FSK). They showed that complex balanced GMAS share many properties with their mass action counterparts including the log parametrization of their complex balanced equilibria, i.e., their set $Z_+(\mathcal{N}, K)= \{ x \in \mathbb{R}^{\mathcal{S}}_{>0} | \log (x) - \log (x^*) \in \widetilde{S}^\perp \}$, where $x^*$ is a given complex balanced equilibrium and $\widetilde{S}$ is the system's kinetic order subspace (the kinetic analogue of the stoichiometric subspace $S$). In \cite{MURE2014}, they expanded the GMAS concept by dropping the injectivity requirement for $\widetilde{\phi}$ and extended their results to complex balanced PL-RDK systems.
	
	In the past decade since the work of M\"uller and Regensburger, numerous advances have been made in the study of positive equilibria of non-mass action PLK systems (e.g., \cite{talabis:positive:eq:jomc}, \cite{METJ2018}, \cite{TMJ2019}, \cite{CMPY2019}, \cite{FLRM2020}, \cite{FOME2021}). However, they have also exclusively focused on the subset of complex balanced equilibria. Even the results of Talabis et al. \cite{talabis:positive:eq:jomc} and Fortun et al. \cite{FLRM2020} on the set of (all) positive equilibria $E_+(\mathcal{N}, K)$ of weakly reversible PLK systems in their  Deficiency Zero and Deficiency One Theorems, can be viewed as results on complex balancing since in these cases, $E_+(\mathcal{N}, K) = Z_+(\mathcal{N}, K)$ \cite{JTAM2021}. To our knowledge, new results have been published for non-weakly reversible ``$\widehat{T}$-rank maximal'' kinetic (PL-TIK) systems satisfying the conditions of Deficiency One Theorem and weakly reversible PL-NIK systems, i.e., all kinetic orders are non-negative, on conservative and concordant networks, as a special case of a result of Shinar and Feinberg \cite{SHFE2012} on weakly monotonic kinetic systems ($\widehat{T}$ is the augmented matrix of kinetic complexes where the non-reactant columns are deleted). The goal of this paper is to contribute two new results in this regard.
	
	A network is cycle terminal if each of its complexes is a reactant complex.
	Our first main result establishes that any cycle terminal PL-FSK system with $E_+(\mathcal{N}, K) \ne \varnothing$ and has independent linkage classes (ILC) is a poly-PLP  system, i.e., $E_+(\mathcal{N}, K)$ is the disjoint union of sets of the form $\{ x \in \mathbb{R}^\mathcal{S}_{>0} | \log (x) - \log (x^*) \in \widetilde{S}^{\perp} \}$, where $x^*$ are given equilibria and the union, when finite has $| E_+(\mathcal{N}, K) \cap Q|$ terms. This result extends the results of Boros in his PhD thesis \cite{boros:thesis} on mass action systems with ILC -- in fact, many of our proofs follow his arguments closely. The key insight for the extension is that factor span surjectivity induces an isomorphic digraph structure on the kinetic complexes, and hence a bijection between their decompositions. This result also completes, for ILC networks, the structural analysis of the original complex balanced GMAS by M\"uller and Regensburger focused on the subset $Z_+(\mathcal{}N, K )$ of complex balanced equilibria. Furthermore, it identifies the class of systems which includes the counterexample of Jose et al. \cite{JTAM2021} for the extension of the theorem of Horn-Jackson on absolutely complex balanced (ACB) systems to complex balanced PL-RDK systems \cite{JTAM2021}.
	
	It follows from the Feinberg's Decomposition Theorem that for a network with ILC and any kinetics, $E_+(\mathcal{N},K) \ne \varnothing \implies  E_+(\mathcal{L}, K) \ne \varnothing$ for each linkage class $\mathcal{L}$. Our second main result identifies a set of PL-RDK systems for which the condition is also sufficient, i.e., $E_+(\mathcal{L}, K) \ne \varnothing$ for each linkage class $\mathcal{L}$ implies $E_+(\mathcal{N},K) \ne \varnothing$. This result also extends a result of Boros on mass action systems with ILC. Furthermore, it extends, for networks with ILC, results of Talabis et al. \cite{talabis:positive:eq:jomc} on PL-TIK systems to its superset of $\widehat{T}$-independent PL-RDK systems.
	
	We conclude the paper with two applications of our results. In the first application, we consider absolute complex balancing in poly-PLP systems. A complex balanced kinetic system, i.e., $Z_+(\mathcal{N},K) \ne \varnothing$, is absolutely complex balanced (ACB) if  $E_+(\mathcal{N},K)=Z_+(\mathcal{N},K)$, i.e., every positive equilibrium is complex balanced. In 1972, F. Horn and R. Jackson derived the following fundamental result: for mass action kinetics, any complex balanced system is absolutely complex balanced. In \cite{JTAM2021}, Jose et al. posed the following question: to which other kinetic systems, besides mass action, does the Horn-Jackson result extend? Using the results of Jose et al., we show that for complex balanced PL-RDK systems which are poly-PLP with flux subspace $\widetilde{S}$ (the kinetic order subspace of the system), the multi-PLP systems are precisely those which are not ACB. We then apply our results for cycle terminal PL-FSK systems to show that for these systems, multi-PLP is equivalent to multi-stationarity.
	This criterion identifies the set of monostationary PL-FSK systems with ILC as a new class of PL-RDK systems to which the Horn-Jackson result extends.
	In addition, we apply our results to study absolute concentration robustness (ACR) in these systems. The property of ACR in a species was introduced by G. Shinar and M. Feinberg in a well-known paper in the journal {\it{Science}} and denotes the invariance of the species concentration at all positive equilibria. Their results for mass action systems have been extended to power law kinetic systems, and we use the specific structure of the positive equilibria sets of the systems studied to obtain a species hyperplane containment criterion to determine ACR in the system species.
	
	The paper is organized as follows: Section \ref{prelim} collects the basic concepts and results on reaction networks and kinetic systems needed in the later sections. In Section \ref{results1}, poly-PLP systems are introduced and the first main result is shown. $\widehat{T}$-independence is discussed in Section \ref{results2} and shown to ensure the sufficiency of non-empty linkage class equilibria sets for the same in the whole network for networks with ILC.
	In Section \ref{results4} we study absolute complex balancing in poly-PLP systems and identify a new class of poly-PLP PL-RDK systems with the property.
	Section \ref{results3} formulates a general species hyperplane criterion for ACR and applies it to the systems studied. Summary and outlook are provided in Section \ref{summary}.

	\section{Preliminaries}
	\label{prelim}
	In this section, we provide essential background for our succeeding discussions. We present a number of definitions and useful results on chemical reaction networks and chemical kinetic systems primarily from the works of M. Feinberg \cite{Feinberg:1987:CRN,feinberg:book,Feinberg:1979:lec,Feinberg:1995:existence}, F. Horn and R. Jackson \cite{HORN1972:GMAK}, and C. Wiuf and E. Feliu \cite{WIUF2013:PLK}.
	
	\subsection{Fundamentals of chemical reaction networks}
	We start by giving a formal definition of a chemical reaction network, or simply CRN:
	\begin{definition}
		A {\bf chemical reaction network} is a triple $\mathcal{N} := \left(\mathcal{S},\mathcal{C},\mathcal{R}\right)$ of nonempty finite sets $\mathcal{S}$, $\mathcal{C} \subseteq \mathbb{R}_{\ge 0}^\mathcal{S}$, and $\mathcal{R} \subset \mathcal{C} \times \mathcal{C}$, of $m$ species, $n$ complexes, and $r$ reactions, respectively, that satisfies the following properties:
		\begin{itemize}
		\item[i.] $y \to y \notin \mathcal{R}$ for each $y \in \mathcal{C}$, and
		\item[ii.] for each $y \in \mathcal{C}$, there exists $y' \in \mathcal{C}$ such that $y \to y' \in \mathcal{R}$ or $y' \to y \in \mathcal{R}$.
		\end{itemize}
	\end{definition}

Chemical reaction networks can be treated as directed graphs, which are known as reaction graphs in literature, where complexes are vertices and reactions are arcs.
The (strongly) connected components are precisely the {\bf (strongly) linkage classes} of the CRN. A strong linkage class is a {\bf terminal strong linkage class} if there is no reaction from a complex in the strong linkage class to a complex outside the given strong linkage class.

For a given CRN with $\ell, sl,$ and $t$ are
the number of linkage classes, the number of strong linkage classes, and the number of terminal strong linkage classes, respectively, we have $\ell \le t \le sl$.
A CRN is {\bf weakly reversible} if $sl=\ell$, i.e., each linkage class is strongly connected, and is {\bf $t$-minimal} if $t=\ell$.
The terminal strong linkage classes can be of two kinds: cycles (in the sense of directed graphs) that are not necessarily simple, and singletons, which we call {\bf{terminal points}}.
We let $n_r$ be the number of reactant complexes of a CRN. Then, $n-n_r$ is the number of terminal points. A CRN is {\bf cycle terminal} if and only if $n - n_r = 0$.

We then proceed with introducing the following matrices that are essential to our work:	
	\begin{definition}
		The {\bf molecularity matrix} or {\bf matrix of complexes} $Y$ is an $m\times n$ matrix such that $Y_{ij}$ is the stoichiometric coefficient of species $i$ in a complex $j$.
		The {\bf incidence matrix} $I_a$ is an $n\times r$ matrix such that 
		$${\left( {{I_a}} \right)_{ij}} = \left\{ \begin{array}{rl}
			- 1&{\rm{ if \ complex \ }} i {\rm{ \ is\ the\ reactant \ complex \ of \ reaction \ }}{j},\\
			1&{\rm{  if \ complex \ }}{i}{\rm{ \ is \ the\ product \ complex \ of \ reaction \ }}{j},\\
			0&{\rm{    otherwise}}.
		\end{array} \right.$$
		The {\bf stoichiometric matrix} $N$ is the $m\times r$, which is given by the product $Y\cdot I_a$.
	\end{definition}

We now provide definitions for what we call the stochiometric subspace and the deficiency of a chemical reaction network.
	
	\begin{definition}
		The {\bf stoichiometric subspace} of the network $\mathcal{N}$ is defined as $$S := \left\{{y' - y \in \mathbb{R}^\mathcal{S}| y \to y' \in \mathcal{R}}\right\},$$
		i.e., the linear span of the reaction vectors over $\mathbb{R}$. The {\bf rank} of the network is given by $s:=\dim S$.
		For $x \in \mathbb{R}_{ > 0}^\mathcal{S}$, its {\bf stoichiometric compatibility class} is defined as
		$\left( {x + S} \right) \cap \mathbb{R}_{ \ge 0}^\mathcal{S}$.
		Two vectors $x^{*}, x^{**} \in {\mathbb{R}^m}$ are {\bf stoichiometrically compatible} if $x^{*}-x^{**}$ is an element of the stoichiometric subspace $S$.
	\end{definition}

	\begin{definition}
	The {\bf deficiency} of a CRN is $\delta=n-\ell-s$ where $n$ is the number of complexes, $\ell$ is the number of linkage classes, and $s$ is the rank of the network.
	\end{definition}

Decomposition of a chemical reaction network is induced from partitioning the reaction set into subsets. M. Feinberg introduced the so-called independent decomposition of chemical reaction networks as given in Definition \ref{def:independent:dec} (Section 5.4 in \cite{Feinberg:1987:CRN}, Appendix 6.A in \cite{feinberg:book}). In our work, however, we consider the linkage classes as the subnetworks under network decomposition.

\begin{definition}
	A decomposition of a chemical reaction network $\mathcal{N}$ into $k$ subnetworks of the form $\mathcal{N}=\mathcal{N}_1 \cup \cdots \cup \mathcal{N}_k$  is {\textbf{independent}} if its stoichiometric subspace is equal to the direct sum of the stoichiometric subspaces of its subnetworks, i.e., $S=S_1 \oplus  \cdots  \oplus S_k$.
	\label{def:independent:dec}
\end{definition}

\begin{remark}
We emphasize that whenever we write equations like $\delta = \delta ^1 + \cdots + \delta ^\ell$ or $S = S^1 \oplus \cdots \oplus S^\ell$ as in Proposition \ref{directsum:deficiencies}, the $i$th linkage class has deficiency $\delta^i$ and stoichiometric subspace $S^i$.
A reaction network that satisfies property (i) or property (ii) in Proposition \ref{directsum:deficiencies} possesses the {\bf{independent linkage classes}} property, or simply {\bf{ILC}}.
\label{rem:ILC}
\end{remark}
	
	\begin{proposition} \cite{boros:paper:thesis,boros:thesis}
	Let $\mathcal{N}$ be a reaction network.
	Then the following statements are equivalent.
	\begin{itemize}
		\item[i.] $\delta = \delta ^1 + \cdots + \delta ^\ell$
		\item[ii.] $S = S^1 \oplus \cdots \oplus S^\ell$
	\end{itemize}
	\label{directsum:deficiencies}
	\end{proposition}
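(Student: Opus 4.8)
The plan is to reduce both statements to the single numerical identity $s = s^1 + \cdots + s^\ell$, where $s^i := \dim S^i$, and then to recognize this identity as the classical dimension criterion for a sum of subspaces to be direct.

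First I would set up the bookkeeping across linkage classes. Write $n_i$ for the number of complexes in the $i$th linkage class. Since the linkage classes are exactly the connected components of the reaction graph, the complex set is partitioned by them, so $n = n_1 + \cdots + n_\ell$. Each linkage class, viewed as a subnetwork, is connected and hence has a single linkage class of its own, so its deficiency is $\delta^i = n_i - 1 - s^i$. Summing over $i$ gives
\[
\delta^1 + \cdots + \delta^\ell = \sum_{i=1}^\ell (n_i - 1 - s^i) = n - \ell - \sum_{i=1}^\ell s^i,
\]
whereas by definition $\delta = n - \ell - s$. Comparing the two expressions, statement (i) holds if and only if $s = s^1 + \cdots + s^\ell$.

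Next I would identify $S$ as the (not necessarily direct) sum $S^1 + \cdots + S^\ell$. The point is that every reaction $y \to y'$ joins two complexes lying in a common linkage class, so its reaction vector $y' - y$ lies in the corresponding $S^i$; since $S$ is spanned by all reaction vectors, $S = S^1 + \cdots + S^\ell$. The standard linear-algebra fact then gives $\dim(S^1 + \cdots + S^\ell) \le s^1 + \cdots + s^\ell$, with equality precisely when the sum is direct. Combining this with the previous paragraph, statement (i), namely $s = \sum_{i} s^i$, is equivalent to the sum $S^1 + \cdots + S^\ell$ being direct, which is exactly statement (ii).

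The argument is largely bookkeeping, and I expect no genuine obstacle; the only place that requires a word of care is confirming that reaction vectors never cross linkage classes, so that $S$ decomposes as $\sum_i S^i$ with no contribution from vectors joining distinct components. This is immediate from the definition of a linkage class as a connected component of the reaction graph. The remaining ingredient — that a sum of finitely many subspaces is direct if and only if its dimension equals the sum of the summand dimensions — is elementary and may be invoked without further comment.
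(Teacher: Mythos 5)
Your proof is correct. The paper itself gives no proof of this proposition, citing it from Boros's paper and thesis; your argument — reducing both statements to the single identity $s = s^1 + \cdots + s^\ell$ via $\delta^i = n_i - 1 - s^i$, observing that $S$ is always the (possibly non-direct) sum $S^1 + \cdots + S^\ell$ because reaction vectors never cross linkage classes, and then invoking the dimension criterion for directness — is exactly the standard argument used in that cited source, with all the delicate points (each linkage class being connected, hence contributing $n_i - 1$; directness $\Leftrightarrow$ additivity of dimension) handled correctly.
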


These next two results were taken from Boros \cite{boros:paper:thesis,boros:thesis} that we will use in the proofs of our main results. The first part of Lemma \ref{unique:vector:subspace} appears in various works in literature such as in Corollary 4.14 of \cite{Feinberg:1979:lec} and in Section 4 of \cite{HORN1972:GMAK}. On the other hand, the next lemma is a consequence of the famous Farkas' Lemma.

	\begin{lemma}
		Let $\mathcal S$ be a linear subspace of $\mathbb{R}^m$, $\mathcal{P}=(p+\mathcal{S}) \cap \mathbb{R}^m_{\ge 0}$ for some $p \in \mathbb{R}^m_{>0}$, and fix $x^* \in \mathbb{R}^m_{> 0}$. Then
		\begin{itemize}
			\item[i.] there exists a unique $x \in \mathcal{P} \cap \mathbb{R}^m_{> 0}$ such that ${\log \left( x \right) - \log \left( x^* \right)} \in \mathcal{S}^{\perp}$, and
			\item[ii.] the set $\{x \in \mathbb{R}_{ > 0}^m | {\log \left( x \right) - \log \left( x^* \right)} \in \mathcal{S}^{\perp}\}$ is $C^{\infty}$-diffeomorphic to $\mathbb{R}^{n-{\rm{rank}}\mathcal{S}}$, and hence, connected.
		\end{itemize}
		\label{unique:vector:subspace}
	\end{lemma}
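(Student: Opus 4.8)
The plan is to prove the two parts with different but related tools: part (i) by a strictly convex variational argument together with a short monotonicity computation for uniqueness, and part (ii) by exhibiting an explicit diffeomorphism built from the coordinatewise exponential map.

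For part (i), I would introduce the strictly convex ``entropy-type'' function
$$g(x) = \sum_{i=1}^m \left( x_i \log \frac{x_i}{x_i^*} - x_i + x_i^* \right),$$
defined on $\mathbb{R}^m_{\ge 0}$ with the convention $0 \log 0 = 0$, and minimize it over the closed convex set $\mathcal{P}$. The point of this choice is that for $x \in \mathbb{R}^m_{>0}$ one has $\nabla g(x) = \log(x) - \log(x^*)$ (coordinatewise logarithm), so the first-order optimality condition at an interior minimizer -- that $\nabla g(x)$ annihilate every feasible direction, the feasible directions of $\mathcal{P} = (p + \mathcal{S}) \cap \mathbb{R}^m_{\ge 0}$ being exactly $\mathcal{S}$ -- reads precisely $\log(x) - \log(x^*) \in \mathcal{S}^{\perp}$. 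Existence of a minimizer follows because $g$ is continuous on $\mathcal{P}$ and coercive on $\mathbb{R}^m_{\ge 0}$ (each term grows like $x_i \log x_i$), so a minimum is attained even when $\mathcal{P}$ is unbounded.

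The main obstacle is to rule out a minimizer on the boundary of $\mathcal{P}$, i.e., to show the minimizer lies in $\mathbb{R}^m_{>0}$; this is exactly where the hypothesis $p \in \mathbb{R}^m_{>0}$ enters. If a minimizer $\bar x$ had $\bar x_j = 0$ for some $j$, I would move along the segment $x(t) = (1-t)\bar x + t p$, which stays in $\mathcal{P}$ by convexity. Because $\partial g / \partial x_j = \log(x_j / x_j^*) \to -\infty$ as $x_j \to 0^+$ while $p_j - \bar x_j = p_j > 0$, the one-sided difference quotient $(g(x(t)) - g(\bar x))/t \to -\infty$ as $t \to 0^+$, so $g$ strictly decreases into the interior, contradicting minimality. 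Hence the minimizer is positive and is the sought $x$. For uniqueness I would argue directly rather than reuse convexity: if $x, x' \in \mathcal{P} \cap \mathbb{R}^m_{>0}$ both satisfy the log-condition, then $x - x' \in \mathcal{S}$ and $\log(x) - \log(x') \in \mathcal{S}^{\perp}$, so their inner product vanishes; but $\langle x - x', \log(x) - \log(x') \rangle = \sum_i (x_i - x_i')(\log x_i - \log x_i')$ is a sum of nonnegative terms, each vanishing only when $x_i = x_i'$, forcing $x = x'$.

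For part (ii), I would observe that the set in question is the image $\exp(\log(x^*) + \mathcal{S}^{\perp})$ of an affine subspace under the coordinatewise exponential. Fixing a linear isomorphism $L : \mathbb{R}^{m - \mathrm{rank}\,\mathcal{S}} \to \mathcal{S}^{\perp}$ (recall $\dim \mathcal{S}^{\perp} = m - \mathrm{rank}\,\mathcal{S}$), the map $\Phi(v)_i = x_i^* \, e^{(Lv)_i}$ is $C^{\infty}$, and its inverse $x \mapsto L^{-1}(\log(x) - \log(x^*))$ is also $C^{\infty}$, so $\Phi$ is the required diffeomorphism; connectedness is then immediate from that of Euclidean space. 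I expect part (ii) to be routine and the interior-versus-boundary step in part (i) to be the only genuinely delicate point.
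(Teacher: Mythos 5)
Your proof is correct. Note that the paper itself does not prove this lemma: it is imported verbatim from Boros's thesis, with the remark that part (i) appears in Feinberg's lecture notes (Corollary 4.14) and in Section 4 of Horn--Jackson. Your argument is essentially that classical proof: existence via minimization of the strictly convex entropy function $g(x)=\sum_i \bigl( x_i\log(x_i/x_i^*)-x_i+x_i^*\bigr)$ with the segment-toward-$p$ computation excluding boundary minimizers, uniqueness via $\langle x-x',\log(x)-\log(x')\rangle=0$ and strict monotonicity of $\log$, and part (ii) via the exponential chart $v\mapsto x^*e^{Lv}$ on $\mathcal{S}^{\perp}$. You also correctly read the exponent in (ii) as $m-\mathrm{rank}\,\mathcal{S}$ (the ``$n$'' in the paper's statement is a typo for the number of species $m$), so nothing further is needed.
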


	\begin{lemma}
		Let $\ell$, $m$, and $c^1, \ldots, c^\ell$ be positive integers. Let $A_i \in {\mathbb{R}}^{{c^i}\times m}$ and $b_i \in {\mathbb{R}}^{c^i}$ for $i=1,\ldots,\ell$. Suppose further that
		\begin{itemize}
		\item[i.] $\{x \in \mathbb{R}^m| A_ix=b_i\} \ne \varnothing$ for each $i=1,\ldots,\ell$, and
		\item[ii.] ${\mathop{\rm Im}\nolimits} \left\{ {A_1^{\top},\ldots,A_\ell^\top} \right\} = {\mathop{\rm Im }\nolimits} A_1^\top \oplus \cdots \oplus {\mathop{\rm Im }\nolimits} A_\ell^\top$.
		\end{itemize}
	Then
	$\bigcap\limits_{i = 1}^\ell {\left\{ {x \in {\mathbb{R}^m}|{A_i}x = {b_i}} \right\} \ne \varnothing}.$
	\label{solution:system:ran}
	\end{lemma}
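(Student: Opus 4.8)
The plan is to fuse the $\ell$ individual linear systems into a single one and show it is consistent. Stack the data by letting $A$ be the $(c^1+\cdots+c^\ell)\times m$ matrix obtained by placing $A_1,\ldots,A_\ell$ on top of one another, and $b=(b_1^\top,\ldots,b_\ell^\top)^\top$, so that $\bigcap_{i=1}^\ell \{x \mid A_i x = b_i\} = \{x \mid Ax = b\}$. It therefore suffices to prove that $Ax=b$ has a solution. The natural tool is the equality form of Farkas' Lemma (the Fredholm alternative): the system $Ax=b$ is consistent if and only if $b \in (\ker A^\top)^\perp$, i.e. $y^\top b = 0$ for every $y$ with $A^\top y = 0$. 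So the whole proof reduces to verifying this orthogonality condition.

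Next I would unpack $\ker A^\top$ using hypothesis (ii). With $A^\top = (A_1^\top \mid \cdots \mid A_\ell^\top)$ the horizontal concatenation, writing $y=(y_1,\ldots,y_\ell)$ with $y_i \in \mathbb{R}^{c^i}$ gives $A^\top y = \sum_{i=1}^\ell A_i^\top y_i$, and each summand lies in $\mathrm{Im}\,A_i^\top$. Since hypothesis (ii) asserts that the sum $\mathrm{Im}\,A_1^\top + \cdots + \mathrm{Im}\,A_\ell^\top$ is \emph{direct}, the equation $\sum_i A_i^\top y_i = 0$ can hold only if every summand vanishes separately; hence $A^\top y = 0$ forces $A_i^\top y_i = 0$ for each $i$. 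This blockwise decoupling is the heart of the argument and the step I expect to be the main obstacle, in the sense that it is the only place where directness is used and the only place where the conclusion would fail without it.

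Finally I would invoke hypothesis (i) blockwise. For each $i$ the system $A_i x = b_i$ is consistent, so the Fredholm alternative applied to the single block yields $b_i \in (\ker A_i^\top)^\perp$; in particular $A_i^\top y_i = 0$ implies $y_i^\top b_i = 0$. Combining this with the previous step, every $y \in \ker A^\top$ satisfies $y_i^\top b_i = 0$ for all $i$, whence $y^\top b = \sum_{i=1}^\ell y_i^\top b_i = 0$. This is precisely the consistency criterion for $Ax=b$, so the combined system admits a solution $x$, which then lies in each $\{x \mid A_i x = b_i\}$ and therefore in their intersection. Everything apart from the directness step is routine linear algebra, requiring no estimates or case analysis.
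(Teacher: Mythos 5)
Your proof is correct, and it follows exactly the route the paper indicates: the paper gives no proof of its own for this lemma, citing Boros and noting only that it is ``a consequence of the famous Farkas' Lemma,'' which is precisely the equality-form Farkas/Fredholm argument you carry out. Your blockwise decoupling of $\ker A^\top$ via the directness hypothesis, followed by the per-block consistency of $A_i x = b_i$, is a complete and correct realization of that approach.
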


\subsection{Fundamentals of chemical kinetic systems}
We now introduce our definitions for a chemical kinetic system and the species formation rate function. Here, we associate a kinetics for a given chemical reaction network to describe its dynamical properties.

\begin{definition}
A {\bf kinetics} for a reaction network $(\mathcal{S}, \mathcal{C}, \mathcal{R})$ is an assignment to
each reaction $y \to y' \in \mathcal{R}$ of a continuously differentiable {\bf rate function} $K_{y\to y'}: \mathbb{R}^\mathcal{S}_{>0} \to \mathbb{R}_{\ge 0}$ such that the following positivity condition holds:
$K_{y\to y'}(c) > 0$ if and only if ${\rm{supp \ }} y \subset {\rm{supp \ }} c$.
The system $\left(\mathcal{N},K\right)$ is called a {\bf chemical kinetic system}.
\end{definition}


\begin{definition}
	The {\bf species formation rate function} (SFRF) of a chemical kinetic system $(\mathcal{N},K)$ is defined as $f\left( x \right) = NK(x)= \displaystyle \sum\limits_{{y} \to {y'} \in \mathcal{R}} {{K_{{y} \to {y'}}}\left( x \right)\left( {{y'} - {y}} \right)}.$
\end{definition}
The dynamical system or system of ordinary differential equations (ODEs) of a chemical kinetic system is given by $\dfrac{{dx}}{{dt}} = f\left( x \right)$. An {\bf equilibrium} or a {\bf steady state} is a zero of $f$.

\begin{definition}
	The {\bf set of positive equilibria} of a chemical kinetic system $\left(\mathcal{N},K\right)$ is given by $${E_ + }\left(\mathcal{N},K\right)= \left\{ {x \in \mathbb{R}^m_{>0}|f\left( x \right) = 0} \right\}.$$ We also denote this set by $E_ +$ for simplicity.
\end{definition}

A chemical reaction network is said to admit {\bf multiple positive equilibria} if there exists a set of positive rate constants such that the system of ODEs admits more than one stoichiometrically compatible equilibria.
\begin{definition}
The {\bf set of complex balanced equilibria} of a chemical kinetic system $\left(\mathcal{N},K\right)$ is given by 
\[{Z_ + }\left(\mathcal{N},K\right) = \left\{ {x \in \mathbb{R}_{ > 0}^m|{I_a} \cdot K\left( x \right) = 0} \right\} \subseteq {E_ + }\left(\mathcal{N},K\right).\]
\end{definition}
A positive vector $c \in \mathbb{R}^m$ is complex balanced if $K\left( c \right)$ is contained in $\ker{I_a}$. A chemical kinetic system is {\bf{complex balanced}} if it has a complex balanced equilibrium.

We now consider an important class of chemical kinetics that generalizes the well-known mass action.
\begin{definition}
	A kinetics $K$ is a {\bf power law kinetics} (PLK) if 
	${K_i}\left( x \right) = {k_i}\prod\limits_j {{x_j}^{{F_{ij}}}}  := {k_i}{{x^{{F_{i}}}}} $  $\forall$ reaction $i =1,\ldots,r$ where ${k_i} \in {\mathbb{R}_{ > 0}}$ and ${F_{ij}} \in {\mathbb{R}}$. The $r \times m$ (reaction by species) matrix $F=\left[ F_{ij} \right]$, containing the kinetic order values, is called the {\bf kinetic order matrix}, and $k \in \mathbb{R}^r$ is called the {\bf rate vector}.
	\label{def:power:law}
\end{definition}

If the kinetic order matrix contains the corresponding stoichiometric coefficients of reactant $y$ for each reaction $y \to y'$ in the network, then  then the system follows the {\bf mass action kinetics}.


\begin{definition}
	A PLK system has {\bf reactant-determined kinetics} (of type PL-RDK) if for any two reactions $i, j$ with identical reactant complexes, the corresponding rows of kinetic orders in $F$ are identical, i.e., ${F_{ik}} = {F_{jk}}$ for $k = 1,2,...,m$. On the other hand, it has a {\bf non-reactant-determined kinetics} (of type PL-NDK) if there exist two reactions with the same reactant complexes whose corresponding rows in $F$ are not identical.
\end{definition}

Fig. \ref{fig:NDK:RDK} shows that power law kinetics can either be PL-RDK or PL-NDK. In addition, mass action kinetics are PL-RDK.

	\begin{figure}
	\centering
		\includegraphics[width=8cm,height=5cm,keepaspectratio]{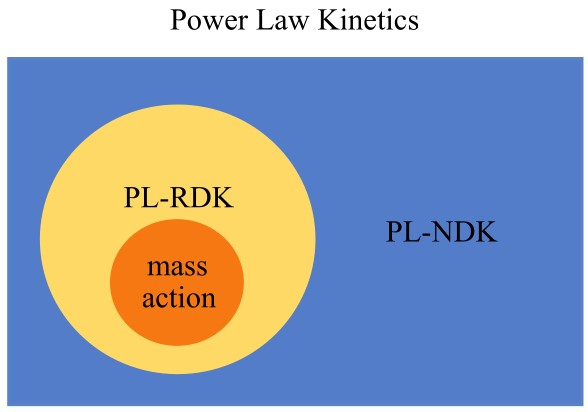}
		\caption{The sets PL-NDK and PL-RDK are complementary in the set of power law kinetics, and mass action kinetics are PL-RDK.}
		\label{fig:NDK:RDK}
\end{figure}
	

From Definition \ref{def:power:law} and due to S. M\"uller and G. Regensburger \cite{MURE2014}, we present the following definitions:

\begin{definition}
	The $m \times n$ matrix $\widetilde{Y}$ is defined as
	$${\left( {\widetilde{Y}} \right)_{ij}} = \left\{ \begin{array}{l}
		{F_{ki}}{\text{  \ \ \ \ \ \ \  if \ $j$ {\text{is the reactant complex in reaction }}$k$}}\\
		{0} {\text{  \ \ \ \ \ \ \  \ \   otherwise \ }}
	\end{array} \right..$$
\end{definition}

\begin{definition}
	The $m \times n_r$ T-matrix is the truncated $\widetilde{Y}$ where the non-reactant columns are deleted.
\end{definition}

\begin{definition}
	The block matrix $\widehat{T} \in \mathbb{R}^{(m+l) \times n_r}$ is defined as
	$\widehat{T} = \left[ {\begin{array}{*{20}{c}}
			T\\
			{{L^\top}}
	\end{array}} \right]$
where the $n_r \times \ell$ matrix $L = [e_1, e_2, \ldots , e_\ell]$, and $e_i$ is a characteristic vector for the linkage class $\mathcal{L}_i$.
\end{definition}
A PL-TIK system is a PL-RDK system whose block matrix has maximal column rank. The subset of PL-TIK with linear independent T-matrix columns is denoted by PL-RLK (reactant set linear independent power-law kinetics).
	
	\section{Positive equilibria of PL-FSK systems on cycle terminal networks with ILC}
	\label{results1}
	
	In this section, we determine the structure of the positive equilibria set of factor span surjective power law kinetic (PL-FSK) systems on cycle terminal networks with independent linkage classes (ILC). These systems include the complex balanced GMAS where M\"uller and Regensburger determined the structure of the subset of complex balanced equilibria. We show that they are poly-PLP systems, i.e., $E_+(N, K)$ is the disjoint union of sets of the form $\{ x \in \mathbb{R}^\mathcal{S}_{>0} | \log (x) - \log (x^*) \in \widetilde{S}^\perp \}$, where $x^*$ are given equilibria.
	Note that the kinetic
	order subspace $\widetilde{S}$ is the kinetic analogue of the stoichiometric subspace $S$, which is generated by
	the differences of kinetic complexes.
	Our results partially extend the result of B. Boros on mass action systems with ILC and those of Jose et al. on (mono-) PLP systems.
	
		
		We now introduce the following definition of an LP set from Jose et al. \cite{JTAM2021} and the definition of a poly-PLP system.
		
		\begin{definition}
		An {\bf{LP set}} is a non-empty subset of $\mathbb{R}^\mathcal{S}_{>0}$
		of the form $E(P,x^*)$ where $P$ is a subspace of $\mathbb{R}^\mathcal{S}$, which is called an LP set's {\underline{flux subspace}}, and $x^*$ a given element of $\mathbb{R}^\mathcal{S}_{>0}$, which is called the LP set's {\underline{reference point}}.
		$P^\perp$ is called an LP set's {\underline{parameter subspace}}, and
		the positive cosets of $P$ are called LP set's {\underline{flux classes}}.
		\end{definition}
		
		\begin{definition}
		A kinetic system $(\mathcal{N}, K)$ is a {\bf{poly-PLP}} system if its set of positive equilibria $E_+(\mathcal{N}, K)$ is the disjoint union of LP sets with flux subspace $P_E$ and reference points $\{x_i^* | i =1,\ldots,\mu\}$ where $\mu := |E_+ \cap Q|$ (with $Q$ as a flux class) or $\infty$ is the coset intersection count.  Analogously, a system is {\bf{poly-CLP}} if its set of complex balanced equilibria $Z_+(\mathcal{N}, K)$ is the disjoint union of LP sets.  A poly-PLP (poly-CLP) system is called {\bf multi-PLP (multi-CLP)} if $\mu \ge 2$, otherwise {\bf PLP (CLP)}. We also denote a poly-PLP (poly-CLP) system with $\mu$-PLP ($\mu$-CLP) if we want to emphasize the value of $\mu$.
		\end{definition}
		
		An example of a poly-PLP system is given in Example \ref{ex:poly:PLP}. We use our main result to show that it is indeed such system.
		
	
	We now introduce the key insight for the extension of the results of Boros in his PhD thesis \cite{boros:paper:thesis,boros:thesis} from mass action to a large class of power law kinetics.
	Note that when a PL-RDK (kinetics) is PL-FSK (power law kinetics which is factor span surjective), different reactant complexes have different kinetic complexes.
	Factor span surjectivity for PL-RDK induces an isomorphic digraph structure on the kinetic complexes as described in Proposition \ref{cycle:terminal:PLFSK:iso}.
	We first state the following definition:
	
	\begin{definition}
		Let $(\mathcal{N}, K)$ be a cycle terminal PLK system. The set of kinetic complexes of a complex $y$ is defined as $\mathcal{\widetilde{C}}(y) := \{ F_q | q \in \mathcal{R}(y)\}$, where
		$\mathcal{R}(y)$ is the set of (branching) reactions of complex $y$.
		Let $\widetilde{y}$ denote an element of the set of kinetic complexes of $y$. For any reaction $q: y \to y'$, $$\mathcal{\widetilde{R}}(q) := \{ \widetilde{y} \to \widetilde{y'}|
		\widetilde{y} \in \mathcal{\widetilde{C}}(y), \widetilde{y'} \in \mathcal{\widetilde{C}}(y')\}$$ is the set of kinetic reactions of $q$, and $\widetilde{q}$ denotes an element of the set.
		\label{def:cycle:term}
	\end{definition}
	
	We now define the reaction network $\mathcal{\widetilde{N}}$ of kinetic complexes of a cycle terminal PLK system $(\mathcal{\widetilde{N}}, K)$:
	\begin{definition}
		The set of kinetic complexes has a reaction network structure given by $\mathcal{\widetilde{N}}= (\mathcal{S}, \mathcal{\widetilde{C}}, \mathcal{\widetilde{R}})$
		with $\mathcal{\widetilde{C}}= \bigcup\limits_y   \mathcal{\widetilde{C}} (y)$ 
		and $\mathcal{\widetilde{R}}= \bigcup\limits_q   \mathcal{\widetilde{R}} (q)$.
		\label{def:cyc:terminal:network}
	\end{definition}
	
	
For cycle terminal PL-RDK systems, S. M\"uller and G. Regensburger introduced an analogue of the stoichiometric subspace of $\mathcal{N}$ for $\widetilde{\mathcal{N}}$:

\begin{definition}
The {\bf kinetic order subspace} of a cycle terminal PL-RDK system $(\mathcal{N},K)$ is defined as the linear span $\widetilde{S} := < \widetilde{y'}- \widetilde{y} | y \to y' \in \mathcal{R}>$. It is the stoichiometric subspace of $\widetilde{\mathcal{N}}$. The {\bf kinetic deficiency} $\widetilde{\delta}$ is defined as
$\widetilde{\delta}=n-l-\widetilde{s}$, where $\widetilde{s}={\rm{dim \ }} \widetilde{S}$, i.e., the rank of the network $\widetilde{\mathcal{N}}$.
\end{definition}
	
	\begin{figure}
	\centering
		\includegraphics[width=8cm,height=5cm,keepaspectratio]{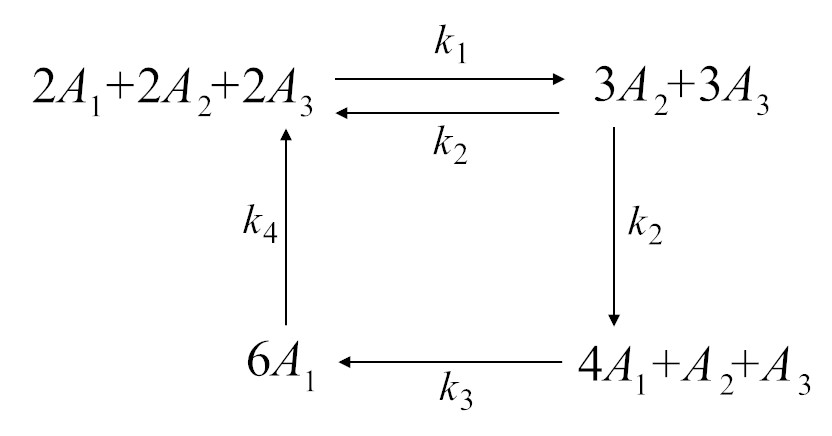}
		\caption{A chemical reaction network from Jose et al. \cite{JTAM2021}}
		\label{fig:Josecounterexample}
\end{figure}
	
	\begin{example}
	Consider the example of Jose et al. \cite{JTAM2021}, with underlying network $\mathcal{N}$, depicted in Fig. \ref{fig:Josecounterexample}. We let the complexes in $\mathcal{N}$ be labeled as $C_1=2A_1+2A_2+2A_3$, $C_2=3A_2+3A_3$, $C_3=4A_1+A_2+A_3$, and $C_4=6A_1$. Then, we have these reactions:
	\begin{align*}
	    r_1&:={C}_1 \to {C}_2\\
	    r_2&:={C}_2 \to {C}_1\\
	    r_3&:={C}_2 \to {C}_3\\
	    r_4&:={C}_3 \to {C}_4\\
	    r_5&:={C}_4 \to {C}_1\\
	\end{align*}
	In addition, the kinetics is defined using the following kinetic order matrix \cite{JTAM2021}:
	\[F = \left[ {\begin{array}{*{20}{c}}
0&{ - 1}&1\\
{ - 1}&{ - 1}&1\\
{ - 1}&{ - 1}&1\\
0&{ - 2}&0\\
0&0&{ - 2}
\end{array}} \right].\]

	Note that each complex in $\mathcal{N}$ is a reactant complex, and hence, the network is cycle terminal.
	Then, we have the following kinetic complexes: 
	$\widetilde{C}_1:=-A_2+A_3$, $\widetilde{C}_2:=-A_1-A_2+A_3$, $\widetilde{C}_3:=-2A_2$, and $\widetilde{C}_4:=-2A_3$.
	One may write the kinetic complexes by expressing the species as the standard basis vectors of ${\mathbb{R}}^m$, for instance, $-A_2+A_3$ may be written as $(0,-1,1)$. Moreover, we form the following kinetic reactions:
	\begin{align*}
	    \mathcal{R}(r_1)&=\{\widetilde{C}_1 \to \widetilde{C}_2\}\\
	    \mathcal{R}(r_2)&=\{\widetilde{C}_2 \to \widetilde{C}_1\}\\
	    \mathcal{R}(r_3)&=\{\widetilde{C}_2 \to \widetilde{C}_3\}\\
	    \mathcal{R}(r_4)&=\{\widetilde{C}_3 \to \widetilde{C}_4\}\\
	    \mathcal{R}(r_5)&=\{\widetilde{C}_4 \to \widetilde{C}_1\}\\
	\end{align*}
	We now form the structure 
	$\mathcal{\widetilde{N}}= (\mathcal{S}, \mathcal{\widetilde{C}}, \mathcal{\widetilde{R}})$
		with $\mathcal{\widetilde{C}}= \{\widetilde{C}_1, \widetilde{C}_2, \widetilde{C}_3, \widetilde{C}_4 \}$ 
		and
		$\mathcal{\widetilde{R}}=\{\widetilde{C}_1 \to \widetilde{C}_2,\widetilde{C}_2 \to \widetilde{C}_1, \widetilde{C}_2 \to \widetilde{C}_3, \widetilde{C}_3 \to \widetilde{C}_4, \widetilde{C}_4 \to \widetilde{C}_1\}$.
	\end{example}
	
	The following proposition
	is a key insight for our results in this section. In particular, we need the assumption of a kinetic system to be cycle terminal and PL-FSK for digraph isomorphism between the network $\mathcal{N}$ and the induced network $\widetilde{\mathcal{N}}$ as defined in Definition \ref{def:cyc:terminal:network} to proceed with Lemma \ref{pi:eq} and succeeding propositions.
	
	\begin{proposition}
		Let $(\mathcal{N}, K)$ be a cycle terminal PL-FSK system. Then
		\begin{itemize}
			\item[i.] $\widetilde{\mathcal{N}}$ is isomorphic (as a digraph) to $\mathcal{N}$.  Consequently, $I_a = {\widetilde{I_a}}$,
			and ${\rm{Im \ }} I_a$ is isomorphic to ${\rm{Im \ }} \widetilde{I_a}$.
			\item[ii.] The isomorphism induces a bijection between the set of decompositions of $\mathcal{N}$ and $\widetilde{\mathcal{N}}$.
		\end{itemize}
		\label{cycle:terminal:PLFSK:iso}
	\end{proposition}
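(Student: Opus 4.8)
The plan is to exhibit an explicit digraph isomorphism and then transport every claimed consequence along it.

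First I would construct the vertex map. Since the network is cycle terminal, every complex $y \in \mathcal{C}$ is a reactant complex, so its set of kinetic complexes $\widetilde{\mathcal{C}}(y) = \{F_q \mid q \in \mathcal{R}(y)\}$ is nonempty; the PL-RDK hypothesis forces all these rows to coincide, so $\widetilde{\mathcal{C}}(y)$ is a singleton $\{\widetilde{y}\}$ and the assignment $\widetilde{\phi}\colon \mathcal{C}\to\widetilde{\mathcal{C}}$, $y\mapsto\widetilde{y}$, is a well-defined function whose image is $\widetilde{\mathcal{C}}$ by the very definition of that set. It is therefore surjective, and factor span surjectivity (PL-FSK) is precisely injectivity of $\widetilde{\phi}$; hence $\widetilde{\phi}$ is a bijection of vertex sets, and in particular $|\widetilde{\mathcal{C}}| = |\mathcal{C}| = n$.

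Next I would promote this to an edge map. For a reaction $q\colon y\to y'$, cycle terminality guarantees that $y'$ is also a reactant, so $\widetilde{y'}$ is defined, and PL-RDK collapses $\widetilde{\mathcal{R}}(q)$ to the single arc $\widetilde{y}\to\widetilde{y'}$; I set $q \mapsto \widetilde{q}$. This map is surjective onto $\widetilde{\mathcal{R}}$ by construction. For injectivity, suppose two reactions produce the same kinetic arc $\widetilde{y}\to\widetilde{y'}$; injectivity of $\widetilde{\phi}$ recovers equal reactant and equal product complexes, and since $\mathcal{R}\subset\mathcal{C}\times\mathcal{C}$ admits at most one arc between an ordered pair of complexes, the two reactions coincide. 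The pair $(\widetilde{\phi},\, q\mapsto\widetilde{q})$ sends the arc $y\to y'$ to the arc $\widetilde{\phi}(y)\to\widetilde{\phi}(y')$, so it preserves incidence in both directions and is a digraph isomorphism from $\mathcal{N}$ onto $\widetilde{\mathcal{N}}$. The consequences are then formal: labeling the complexes and reactions of $\widetilde{\mathcal{N}}$ by their $\widetilde{\phi}$- and $(q\mapsto\widetilde{q})$-preimages, the entry $(\widetilde{I_a})_{ij}$ records whether $\widetilde{\phi}(i)$ is reactant, product, or neither of $\widetilde{q}_j$, which the isomorphism equates with the corresponding datum for $i$ and $q_j$; thus $\widetilde{I_a}=I_a$ entrywise and, a fortiori, $\mathrm{Im}\,\widetilde{I_a}\cong\mathrm{Im}\,I_a$. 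For part (ii), I would use that a decomposition is by definition a partition of the reaction set; since $q\mapsto\widetilde{q}$ is a bijection $\mathcal{R}\to\widetilde{\mathcal{R}}$, it carries partitions of $\mathcal{R}$ bijectively to partitions of $\widetilde{\mathcal{R}}$ with inverse supplied by the inverse bijection, yielding the claimed correspondence between the two sets of decompositions.

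The only genuinely delicate point is the well-definedness and injectivity of the edge map, and both rest on $\widetilde{\phi}$ being a true bijection — which in turn requires all three hypotheses simultaneously: cycle terminality (so that product complexes also carry kinetic complexes, and the arc $\widetilde{y}\to\widetilde{y'}$ even exists), PL-RDK (so that $\widetilde{\phi}$ is single-valued), and PL-FSK (so that it is injective). I expect this interlocking of the three assumptions, rather than any computation, to be the crux of the argument.
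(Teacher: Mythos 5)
Your proof is correct and matches the paper's intended reasoning: the paper actually states Proposition \ref{cycle:terminal:PLFSK:iso} without an explicit proof, treating it as immediate from the definitions --- cycle terminality makes each $\widetilde{\mathcal{C}}(y)$ nonempty, PL-RDK collapses it to a singleton, and factor span surjectivity makes $y \mapsto \widetilde{y}$ injective, which is precisely the interlocking of hypotheses you identify. Your writeup is the explicit formalization of that chain, including the edge-level bijection (using that $\mathcal{R}\subset\mathcal{C}\times\mathcal{C}$ admits at most one arc per ordered pair) and the transport of reaction-set partitions for part (ii), so there is nothing to correct.
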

	
	Before we present the main results, we first present these basic results and definitions that are essential in the latter part of this section. We define $\pi$ (Definition \ref{def:pi} given below) in the sense of PL-RDK so that we are ensured that $\pi$ is indeed a function. We use Lemma \ref{pi:eq} to establish Lemma \ref{log:subset:equilibria}, which gives a parametrization of a non-empty subset of the set of equilibria $E_+$ of a cycle terminal PL-FSK system that satisfies the ILC property, i.e., $\delta=\delta_1+\cdots+\delta_\ell$ where $\ell$ is the number of linkage classes of the underlying network, with provision of course of the non-emptiness of $E_+$.
	
	\begin{proposition}
		Let $(\mathcal{N}, K)$ be a kinetic system that satisfies $\delta=\delta_1+\cdots+\delta_\ell$. Suppose that $x \in \mathbb{R}^m_{\ge0}$. Then $f(x)=0$ if and only if $f^i(x)=0$ for each $i=1,\ldots,\ell$.
		\label{SFRF:linkageclass}
	\end{proposition}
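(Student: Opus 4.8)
The plan is to exploit the additivity of the species formation rate function over the linkage classes and then to convert the vanishing of $f$ into the separate vanishing of each summand by means of the independence of the stoichiometric subspaces.

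First I would observe that the reactions of $\mathcal{N}$ partition according to the linkage classes, since a linkage class is a connected component of the reaction graph and every reaction joins two complexes in the same component. Writing $\mathcal{R}^i$ for the reactions of the $i$th linkage class, this gives, for every $x$, the decomposition $f(x) = \sum_{i=1}^{\ell} f^i(x)$, where $f^i(x) = \sum_{y \to y' \in \mathcal{R}^i} K_{y \to y'}(x)\,(y' - y)$ is the SFRF of the $i$th linkage class. The key structural point is that each summand satisfies $f^i(x) \in S^i$, because $f^i(x)$ is by construction a linear combination of reaction vectors $y' - y$ drawn from the $i$th linkage class, and $S^i$ is precisely the span of those reaction vectors.

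The backward implication is then immediate: if $f^i(x) = 0$ for each $i$, summing gives $f(x) = 0$. For the forward implication I would invoke Proposition \ref{directsum:deficiencies}, by which the hypothesis $\delta = \delta^1 + \cdots + \delta^\ell$ is equivalent to $S = S^1 \oplus \cdots \oplus S^\ell$; that is, the subspaces $S^1, \ldots, S^\ell$ are independent, so the only way to express $0$ as a sum $\sum_i v_i$ with $v_i \in S^i$ is the trivial one. Applying this to $0 = f(x) = \sum_{i=1}^{\ell} f^i(x)$, where each $f^i(x) \in S^i$, forces $f^i(x) = 0$ for every $i$, which is exactly what is to be shown.

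I do not anticipate a genuine obstacle here: the whole argument reduces to the additive decomposition $f = \sum_i f^i$ with $f^i(x) \in S^i$, together with the uniqueness of representation in a direct sum, the latter being supplied by the equivalence in Proposition \ref{directsum:deficiencies}. The only step deserving explicit care is the verification that the reaction set splits cleanly over the linkage classes so that the decomposition holds with the membership $f^i(x) \in S^i$; this follows directly from the definition of a linkage class as a connected component of the reaction graph, and it is where the ILC hypothesis enters, via the direct-sum form of $S$.
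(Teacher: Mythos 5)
Your proof is correct and follows essentially the same route as the paper: both decompose $f = \sum_i f^i$ with $f^i(x) \in S^i$ and then use the equivalence $\delta = \delta^1 + \cdots + \delta^\ell \iff S = S^1 \oplus \cdots \oplus S^\ell$ from Proposition \ref{directsum:deficiencies} so that the direct-sum property forces each summand to vanish. The paper states this in two lines; your version simply spells out the partition of reactions over linkage classes and the uniqueness of representation in a direct sum, which the paper leaves implicit.
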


	\begin{proof}
	Note that $f(x) \in S$, $f^1(x) \in S^1$, \dots, $f^\ell(x) \in S^\ell$. The conclusion follows from Proposition \ref{directsum:deficiencies}.
	\end{proof}
	
	\begin{definition}
		Let $(\mathcal{N},K)$ be a PL-RDK system. For each
		complex $y$, the function ${\pi _y}:\mathbb{R}_{ > 0}^m \times \mathbb{R}_{ > 0}^m \to \mathbb{R}_{ > 0}$ is given by
		\[{\pi _y}\left( {x,x'} \right) = \prod\limits_{s = 1}^m {{{\left( {\frac{{{x_s}}}{{x{'_s}}}} \right)}^{{{\widetilde{Y}}_{sy}}}}}. \]
		\label{def:pi}
	\end{definition}
	
	\begin{lemma}
		Let $(\mathcal{N},K)$ be a cycle terminal PL-FSK system, and the  function $\pi_y$ be given in Definition \ref{def:pi}.
		Then, the following statements are equivalent.
		\begin{itemize}
			\item[i.] ${\pi _y}\left( {x,x'} \right) = {\pi _{y'}}\left( {x,x'} \right)$ for any $y \to y' \in \mathcal{R}$
			\item[ii.] $\left\langle {{{\widetilde{Y}}_{y'}} - {{\widetilde{Y}}_{y}},\log \left( x \right) - \log \left( x' \right)} \right\rangle  = 0$ for any $y \to y' \in \mathcal{R}$
		\end{itemize}
		\label{pi:eq}	
	\end{lemma}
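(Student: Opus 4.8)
The plan is to recognise condition (i) as the exponential of the linear condition (ii), so that the equivalence reduces to the bijectivity of $\log$ on the positive reals. First I would note that since $x, x' \in \mathbb{R}^m_{>0}$ and the kinetic orders $\widetilde{Y}_{sy}$ are real numbers, every factor $(x_s/x'_s)^{\widetilde{Y}_{sy}}$ is a positive real, hence $\pi_y(x,x') > 0$ for each complex $y$. Here the cycle terminal hypothesis guarantees that the column $\widetilde{Y}_y$ is defined for every complex (each being a reactant complex), and the PL-RDK property guarantees that this column, and therefore $\pi_y$, is single-valued and well defined.

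Next I would take the natural logarithm. Using $\log(a^b) = b\log a$ and $\log(x_s/x'_s) = \log x_s - \log x'_s$, I obtain
$$\log \pi_y(x,x') = \sum_{s=1}^m \widetilde{Y}_{sy}\big(\log x_s - \log x'_s\big) = \big\langle \widetilde{Y}_y, \log(x) - \log(x') \big\rangle,$$
where $\widetilde{Y}_y$ denotes the $y$-th column of $\widetilde{Y}$. Because $\log$ is a bijection from $\mathbb{R}_{>0}$ onto $\mathbb{R}$ and all the $\pi$ values are strictly positive, the equality $\pi_y(x,x') = \pi_{y'}(x,x')$ holds if and only if $\log \pi_y(x,x') = \log \pi_{y'}(x,x')$, i.e. if and only if
$$\big\langle \widetilde{Y}_y, \log(x) - \log(x')\big\rangle = \big\langle \widetilde{Y}_{y'}, \log(x) - \log(x')\big\rangle.$$
By bilinearity of the standard inner product this is in turn equivalent to $\langle \widetilde{Y}_{y'} - \widetilde{Y}_y, \log(x) - \log(x')\rangle = 0$. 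Carrying this chain of equivalences out for every reaction $y \to y' \in \mathcal{R}$ yields the equivalence of (i) and (ii).

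I do not expect a genuine obstacle here: the argument is a one-line logarithmic computation once positivity and well-definedness are in place. The only points requiring care are (a) confirming that each $\pi_y$ is a strictly positive, single-valued function, which is exactly where the PL-RDK and cycle terminal hypotheses enter by pinning down a unique kinetic order row for each reactant complex, and (b) being explicit that the passage $\pi_y = \pi_{y'} \Leftrightarrow \log \pi_y = \log \pi_{y'}$ relies on the injectivity of $\log$, valid precisely because the arguments are positive. Factor span surjectivity itself plays no role in this particular equivalence; it becomes essential only in the later propositions, where the digraph isomorphism of Proposition \ref{cycle:terminal:PLFSK:iso} is invoked.
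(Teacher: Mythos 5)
Your proposal is correct and follows essentially the same route as the paper's own proof: take logarithms of the defining products, use $\log(a^b)=b\log a$ and injectivity of $\log$ on $\mathbb{R}_{>0}$, and rearrange into the inner product $\left\langle \widetilde{Y}_{y'}-\widetilde{Y}_{y},\log(x)-\log(x')\right\rangle = 0$. Your added remarks on positivity, well-definedness of $\pi_y$ via PL-RDK, and the fact that factor span surjectivity is not actually used in this lemma are accurate and consistent with the paper.
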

	\begin{proof}
		\[{\pi _y}\left( {x,x'} \right) = {\pi _{y'}}\left( {x,x'} \right) \Leftrightarrow \prod\limits_{s = 1}^m {{{\left( {\frac{{{x_s}}}{{x{'_s}}}} \right)}^{{{\widetilde{Y}}_{sy}}}}}  = \prod\limits_{s = 1}^m {{{\left( {\frac{{{x_s}}}{{x{'_s}}}} \right)}^{{{\widetilde{Y}}_{sy'}}}}} \]
		\[ \Leftrightarrow {{\widetilde{Y}}_{1y}}\log \left( {\frac{{{x_1}}}{{x{'_1}}}} \right) + \cdots + {{\widetilde{Y}}_{my}}\log \left( {\frac{{{x_m}}}{{x{'_m}}}} \right) = {{\widetilde{Y}}_{1y'}}\log \left( {\frac{{{x_1}}}{{x{'_1}}}} \right) + \cdots + {{\widetilde{Y}}_{my'}}\log \left( {\frac{{{x_m}}}{{x{'_m}}}} \right)\]
		\[ \Leftrightarrow \left( {{{\widetilde{Y}}_{1y'}} - {{\widetilde{Y}}_{1y}}} \right)\log \left( {\frac{{{x_1}}}{{x{'_1}}}} \right) + \cdots + \left( {{{\widetilde{Y}}_{my'}} - {{\widetilde{Y}}_{my}}} \right)\log \left( {\frac{{{x_m}}}{{x{'_m}}}} \right) = 0\]
		\[ \Leftrightarrow \left\langle {{{\widetilde{Y}}_{y'}} - {{\widetilde{Y}}_{y}},\log \left( {\frac{x}{{x'}}} \right)} \right\rangle  = 0\]	
	\end{proof}
	
	\begin{lemma}
		Let $(\mathcal{N},K)$ be a cycle terminal PL-FSK system that satisfies $\delta=\delta_1+\cdots+\delta_\ell$.
		Suppose that $E_+(\mathcal{N},K) \ne \varnothing$ and fix $x^* \in E_+(\mathcal{N},K)$. Then
		$$\{x \in \mathbb{R}_{ > 0}^m | {\log \left( x \right) - \log \left( x^* \right)} \in \widetilde{S}^{\perp}\} \subseteq E_+(\mathcal{N},K).$$
		\label{log:subset:equilibria}
	\end{lemma}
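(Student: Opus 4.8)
The plan is to prove the containment pointwise: I take an arbitrary $x \in \mathbb{R}^m_{>0}$ with $\log(x) - \log(x^*) \in \widetilde{S}^\perp$ and show $f(x) = 0$. The strategy is to reduce the equilibrium condition to a per-linkage-class condition using the ILC hypothesis and Proposition \ref{SFRF:linkageclass}, and then to show that each linkage-class contribution $f^i(x)$ is a scalar multiple of $f^i(x^*)$, which vanishes because $x^*$ is an equilibrium.

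First I would translate the membership $\log(x) - \log(x^*) \in \widetilde{S}^\perp$ into a statement about the functions $\pi_y$. Since $\widetilde{S} = \langle \widetilde{Y}_{y'} - \widetilde{Y}_{y} \mid y \to y' \in \mathcal{R}\rangle$, the hypothesis says precisely that $\langle \widetilde{Y}_{y'} - \widetilde{Y}_{y}, \log(x) - \log(x^*)\rangle = 0$ for every reaction $y \to y'$, which is condition (ii) of Lemma \ref{pi:eq}. Because the system is cycle terminal and PL-FSK, Lemma \ref{pi:eq} applies and yields condition (i): $\pi_y(x,x^*) = \pi_{y'}(x,x^*)$ for every reaction $y \to y'$. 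Writing $\lambda_y := \pi_y(x,x^*)$ and using that cycle terminality guarantees $\widetilde{Y}_y$, and hence $\pi_y$, is defined at every complex, the equalities $\lambda_y = \lambda_{y'}$ along every reaction force $\lambda$ to be constant on each connected component of the reaction graph, i.e. constant on each linkage class; I denote its value on $\mathcal{L}_i$ by $\lambda^{(i)}$.

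The next step is the flux identity. For a PL-RDK system the rate of a reaction $q \colon y \to y'$ is $K_q(x) = k_q x^{\widetilde{Y}_y}$, so $K_q(x)/K_q(x^*) = (x/x^*)^{\widetilde{Y}_y} = \pi_y(x,x^*) = \lambda_y$, giving $K_q(x) = \lambda_y\, K_q(x^*)$. Restricting the SFRF to the reactions of $\mathcal{L}_i$ and using that every reactant $y$ appearing there satisfies $\lambda_y = \lambda^{(i)}$, I can factor the common scalar out to obtain $f^i(x) = \lambda^{(i)} f^i(x^*)$. Since the system satisfies $\delta = \delta_1 + \cdots + \delta_\ell$ and $x^* \in E_+(\mathcal{N},K)$, Proposition \ref{SFRF:linkageclass} gives $f^i(x^*) = 0$ for each $i$, whence $f^i(x) = 0$ for each $i$; applying Proposition \ref{SFRF:linkageclass} once more in the converse direction yields $f(x) = 0$, so $x \in E_+(\mathcal{N},K)$.

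The step I expect to be the crux is establishing that $\lambda$ is constant on each linkage class: this is exactly where cycle terminality is indispensable, since it guarantees $\pi_y$ is defined at \emph{every} vertex of the reaction graph so that the pairwise equalities from Lemma \ref{pi:eq} can be propagated along the connected linkage class. Given that, the remaining work is the clean factorization of $\lambda^{(i)}$ out of the linkage-class SFRF and two applications of Proposition \ref{SFRF:linkageclass}, both of which are routine once the flux identity $K_q(x) = \lambda_y K_q(x^*)$ is in hand.
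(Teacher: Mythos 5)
Your proposal is correct and follows essentially the same route as the paper's proof: translate the hypothesis $\log(x)-\log(x^*)\in\widetilde{S}^{\perp}$ via Lemma \ref{pi:eq} into $\pi_y(x,x^*)=\pi_{y'}(x,x^*)$ along every reaction, propagate this to a constant $\pi^i$ on each linkage class, factor it out of $f^i$ to get $f^i(x)=\pi^i f^i(x^*)$, and finish with Proposition \ref{SFRF:linkageclass}. Your only (harmless) deviation is invoking Proposition \ref{SFRF:linkageclass} for the final step $f^i(x)=0 \Rightarrow f(x)=0$, which already follows trivially from $f=\sum_i f^i$.
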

	
	\begin{proof}
		Let $x\in \mathbb{R}_{ > 0}^m$ such that $\log \left( x \right) - \log \left( x^* \right) \in \widetilde{S}^{\perp}$. Then we have $$\left\langle {{{\widetilde{Y}}_{y'}} - {{\widetilde{Y}}_{y}},\log \left( x \right) - \log \left( x^* \right)} \right\rangle  = 0$$ for any reaction $(y,y') \in \mathcal{R}$. By Lemma \ref{pi:eq}, ${\pi _y}\left( {x,x^*} \right) = {\pi _{y'}}\left( {x,x^*} \right)$ for any $y \to y' \in \mathcal{R}$. Hence, ${\pi _y}$ does not change within a linkage class to which the complex $y$ belongs, and we denote the value for the $i$th linkage class by $\pi ^i$. Hence, requiring the cycle terminal PL-FSK property in Proposition \ref{cycle:terminal:PLFSK:iso}. Then,
		\begin{align*}
			f\left( x \right) &= \sum\limits_{i = 1}^\ell {{f^i}\left( x \right)}  = \sum\limits_{i = 1}^\ell {\left( {\sum\limits_{\left( {y,y'} \right) \in {\mathcal{R}_i}} {{\kappa _{yy'}}\prod\limits_{s = 1}^m {x_s^{{{\widetilde{Y}}_{sy}}}\left( {y' - y} \right)} } } \right)}\\
			&= \sum\limits_{i = 1}^\ell {\left( {{\pi ^i}\sum\limits_{\left( {y,y'} \right) \in {\mathcal{R}_i}} {{\kappa _{yy'}}\prod\limits_{s = 1}^m {{{\left( {x_s^*} \right)}^{{{\widetilde{Y}}_{sy}}}}\left( {y' - y} \right)} } } \right)}  = \sum\limits_{i = 1}^\ell {\left( {{\pi ^i}{f^i}\left( {{x^*}} \right)} \right)}.
		\end{align*}
	Since $x^* \in E_+(\mathcal{N},K)$, then $f(x^*)=0$. By Proposition \ref{SFRF:linkageclass}, $f^i(x^*)=0$ for each $i=1,\ldots,\ell$. Thus, $f(x)=0$ and $x \in E_+(\mathcal{N},K)$.
	\end{proof}

In the following next two results, we require the assumption of a system to be cycle terminal PL-FSK since we use Lemma \ref{log:subset:equilibria} in the proofs.

	\begin{theorem}
		Let $(\mathcal{N},K)$ be a cycle terminal PL-FSK system that satisfies $\delta=\delta_1+\cdots+\delta_\ell$. Suppose that $E_+ \ne \varnothing$, $\mathcal{P}$ a positive stoichiometric class such that $E_+ \cap \mathcal{P} \ne \varnothing$, and for $x^* \in \mathbb{R}^m_{>0}$ let $$ Q(x^*)=\{x \in \mathbb{R}^m_{>0} | \log(x)-\log(x^*) \in \widetilde{S}^{\perp}\}$$ then
		$(\mathcal{N}, K)$ is a poly-PLP system
		with flux subspace $P_E=\widetilde{S}$ and reference points of the form $x^{*,j}$
		, i.e.,
		\[E_+=\sideset{}{^*}\bigcup\limits_{{x^{*}} \in {E_ + } \cap \mathcal{P}} {Q\left( {{x^*}} \right)}\]
		where $\bigcup ^*$ indicates the usual union but with the condition that if $x^{*,1},x^{*,2} \in E_+ \cap \mathcal{P}$ and $x^{*,1} \ne x^{*,2}$, then $Q(x^{*,1})\cap Q(x^{*,2}) = \varnothing$.
		\label{positive:stoich:class}
	\end{theorem}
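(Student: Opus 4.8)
The plan is to establish the two set inclusions together with the pairwise-disjointness clause, following Boros's strategy for mass action systems but carrying the log-parametrization through the \emph{kinetic}-order subspace $\widetilde S$ rather than $S$. The whole statement amounts to exhibiting a bijection between $E_+\cap\mathcal P$ and the partition of $E_+$ into LP sets of flux subspace $\widetilde S$, so I will first record that such a partition exists and then show that each block meets $\mathcal P$ in exactly one point. Throughout, the cycle terminal PL-FSK hypothesis is what legitimizes passing between $\mathcal N$ and the induced network $\widetilde{\mathcal N}$, via the digraph isomorphism of Proposition \ref{cycle:terminal:PLFSK:iso}.

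The inclusion $\supseteq$ is immediate: for any $x^*\in E_+\cap\mathcal P\subseteq E_+$, Lemma \ref{log:subset:equilibria} gives $Q(x^*)=\{x\mid \log x-\log x^*\in\widetilde S^{\perp}\}\subseteq E_+$, so the right-hand union sits inside $E_+$. Moreover the relation ``$\log x-\log x'\in\widetilde S^{\perp}$'' is an equivalence relation on $E_+$ (since $\widetilde S^{\perp}$ is a subspace, closed under negation and sums), and by Lemma \ref{log:subset:equilibria} each equivalence class is an entire LP set contained in $E_+$. Hence $E_+$ is automatically a disjoint union of LP sets with flux subspace $\widetilde S$, and it only remains to show that these classes are indexed, without repetition, by $E_+\cap\mathcal P$.

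For $\subseteq$ the key step is: given an arbitrary $x\in E_+$, produce a reference equilibrium $x^*\in E_+\cap\mathcal P$ with $\log x-\log x^*\in\widetilde S^{\perp}$, so that $x\in Q(x^*)$. To locate $x^*$ I would decompose the requirement linkage class by linkage class. By Proposition \ref{cycle:terminal:PLFSK:iso} the isomorphism identifies $I_a$ with $\widetilde{I_a}$ and transports the independent decomposition $S=S^1\oplus\cdots\oplus S^\ell$ (Proposition \ref{directsum:deficiencies}) into a matching direct-sum decomposition of $\widetilde S$ along the same linkage classes. I then encode, for each linkage class $i$, the conditions ``lies in $\mathcal P$'' and ``orthogonal to $\widetilde S^{i}$'' as a linear system $A_i w=b_i$ in $w=\log x^*$; nonemptiness of each local system follows from $E_+\cap\mathcal P\neq\varnothing$ together with the linkage-class splitting of equilibria in Proposition \ref{SFRF:linkageclass}, while the direct-sum decomposition supplies precisely the range hypothesis (ii) of Lemma \ref{solution:system:ran}. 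Invoking Lemma \ref{solution:system:ran} assembles the local data into a single global $w$, hence a global $x^*$; that $x^*\in E_+$ then follows from Lemma \ref{log:subset:equilibria}. Uniqueness of the representative, which yields the disjointness clause, I would read off from Lemma \ref{unique:vector:subspace}(i): if $x^{*,1}\neq x^{*,2}$ both lay in $E_+\cap\mathcal P$ with $Q(x^{*,1})\cap Q(x^{*,2})\neq\varnothing$, subtracting the two membership relations would place $\log x^{*,1}-\log x^{*,2}\in\widetilde S^{\perp}$ with both points in the single class $\mathcal P$, contradicting the uniqueness clause.

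The hard part, and the genuinely new feature beyond the mass action case, is the reconciliation of the stoichiometric class $\mathcal P$ (a coset of $S$) with the kinetic orthogonality condition (governed by $\widetilde S$): in Boros's setting $S=\widetilde S$, so Lemma \ref{unique:vector:subspace}(i) applies verbatim to $\mathcal P$, whereas here $S$ and $\widetilde S$ differ in general and the representative in $\mathcal P$ cannot be obtained by a direct appeal to that lemma. Bridging this gap is exactly what the cycle terminal PL-FSK isomorphism is used for, and I expect the delicate point to be verifying the direct-sum/range hypothesis (ii) of Lemma \ref{solution:system:ran} in this mixed stoichiometric--kinetic setting; once that is in place, the remainder is bookkeeping with the log-parametrization.
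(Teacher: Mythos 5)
Your $\supseteq$ direction and the disjointness observation essentially coincide with the paper's proof (both rest on Lemma \ref{log:subset:equilibria} plus the uniqueness statement of Lemma \ref{unique:vector:subspace}), but your construction for the $\subseteq$ direction has a genuine gap that breaks the argument. You propose to find the representative $x^* \in E_+ \cap \mathcal{P} \cap Q(x)$ by encoding, linkage class by linkage class, the two requirements ``$x^*$ lies in $\mathcal{P}$'' and ``$\log(x)-\log(x^*)$ is orthogonal to $\widetilde{S}^i$'' as linear systems $A_i w = b_i$ in the single variable $w=\log(x^*)$, and then gluing the local solutions with Lemma \ref{solution:system:ran}. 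This cannot work: membership in the stoichiometric class $\mathcal{P}$ is an affine condition on $x^*$ itself ($x^*-x_0 \in S$, i.e., a condition on $e^w$), whereas the orthogonality condition is linear in $\log(x^*)$; there is no common variable in which both constraints are linear, so the Farkas-type Lemma \ref{solution:system:ran}, a purely linear-algebraic statement about intersecting affine solution sets of systems in one variable, is inapplicable. The simultaneous solvability of a linear constraint and a log-linear constraint is exactly the nonlinear (convexity-based) content of Birch's theorem, i.e., of Lemma \ref{unique:vector:subspace}(i), and that is precisely what the paper's proof invokes: given $x \in E_+$, Lemma \ref{unique:vector:subspace} yields the unique $x^* \in \mathcal{P} \cap Q(x)$, Lemma \ref{log:subset:equilibria} shows $x^* \in E_+$, and the symmetry $x \in Q(x^*)$ if and only if $x^* \in Q(x)$ finishes the containment. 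Note also that Lemma \ref{solution:system:ran} is the paper's tool for the other main result, Theorem \ref{extension:talabis}, where after the $\widehat{T}$-reformulation all constraints genuinely are linear in a common variable; it plays no role in Theorem \ref{positive:stoich:class}.

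A second, related flaw: you assert that the digraph isomorphism of Proposition \ref{cycle:terminal:PLFSK:iso} ``transports'' the independence $S=S^1\oplus\cdots\oplus S^\ell$ into a matching direct sum $\widetilde{S}=\widetilde{S}^1\oplus\cdots\oplus\widetilde{S}^\ell$. The isomorphism only identifies the incidence matrices ($I_a=\widetilde{I_a}$) and hence gives a bijection of decompositions as partitions of the reaction set; whether a decomposition is independent depends on the molecularity data ($Y$ versus $\widetilde{Y}$), which the isomorphism does not preserve, so ILC for $\mathcal{N}$ does not imply the analogous property for $\widetilde{\mathcal{N}}$ (one can easily build cycle terminal PL-FSK systems with ILC whose kinetic subspaces $\widetilde{S}^i$ overlap). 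The paper's proof never needs kinetic-side independence: stoichiometric ILC enters only through Proposition \ref{SFRF:linkageclass} inside Lemma \ref{log:subset:equilibria}, while your range hypothesis (ii) for Lemma \ref{solution:system:ran} would require kinetic independence as an extra, unproven assumption. Your closing paragraph correctly identifies the crux---reconciling a coset of $S$ with orthogonality to $\widetilde{S}$; indeed the paper's own one-line appeal to Lemma \ref{unique:vector:subspace} in this mixed setting is itself terse, since the lemma is stated for a single subspace---but you leave that crux as something you ``expect'' to verify, and the mechanism you propose for it fails for the reasons above.
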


	\begin{proof}
	For all $x, x^* \in \mathbb{R}^m_{>0}$, $x \in Q(x^*)$ if and only if $x^{*} \in Q(x)$. By definition of $Q$, we have $x^{*,1}, x^{*,2} \in E_{+} \cap \mathcal{P}$
	and $x \in Q(x^{*,1}) \cap Q(x^{*,2})$ implies $x^{*,1}, x^{*,2}\in Q(x)$. By Lemma \ref{unique:vector:subspace}, $x^{*,1} = x^{*,2}$ implying the disjointness of the sets. By Lemma \ref{log:subset:equilibria}, $$\sideset{}{^*}\bigcup\limits_{{x^*} \in {E_ + } \cap \mathcal{P}} {Q\left( {{x^*}} \right)} \subseteq E_+.$$
	Conversely, suppose $x \in E_+$. By Lemma \ref{unique:vector:subspace}, we are ensured of the existence of unique $x^* \in \mathcal{P} \cap Q(x)$. Then, $x^* \in E_+$ by Lemma \ref{log:subset:equilibria}. Now, $x \in Q(x^*)$, since $x \in Q(x^*)$ if and only if $x^{*} \in Q(x)$. Then, the other containment follows.
	\end{proof}

\begin{example}
	We again consider the reaction network from Jose et al. \cite{JTAM2021} as given in Fig. \ref{fig:Josecounterexample}. The kinetics and $\widehat{T}$-matrix are given as follows:
	\[K\left( x \right) = \left[ {\begin{array}{*{20}{c}}
			{{k_1}{A_2}^{ - 1}{A_3}}\\
			{{k_2}{A_1}^{ - 1}{A_2}^{ - 1}{A_3}}\\
			{{k_2}{A_1}^{ - 1}{A_2}^{ - 1}{A_3}}\\
			{{k_3}{A_2}^{ - 2}}\\
			{{k_4}{A_3}^{ - 2}}
	\end{array}} \right]{\rm{    \ and \      }}\widehat{T} = \left[ {\begin{array}{*{20}{c}}
			0&{ - 1}&0&0\\
			{ - 1}&{ - 1}&{ - 2}&0\\
			1&1&0&{ - 2}\\
			1&1&1&1
	\end{array}} \right].\]
	Since the network has a single linkage class, it has ILC. It is in fact PL-RLK, i.e., the set column vectors of the T-matrix is linearly independent, and thus factor span surjective. According to Theorem \ref{positive:stoich:class}, the system is poly-PLP.
	\label{ex:poly:PLP}
\end{example}

	\begin{proposition}
		Let $(\mathcal{N},K)$ be a cycle terminal PL-FSK that satisfies $\delta=\delta_1+\cdots+\delta_\ell$. Suppose that $E_+ \ne \varnothing$. Then
		\begin{itemize}
			\item[i.] for all positive stoichiometric classes $\mathcal{P}$ and $\mathcal{P}'$, there exists a bijection between $E_+ \cap \mathcal{P}$ and $E_+ \cap \mathcal{P}'$, and
			\item[ii.] $E_+ \cap \mathcal{P}$ is finite for some positive stoichiometric class $\mathcal{P}$ then
				\begin{itemize}
					\item[a.] $E_+ \cap \mathcal{P}'$ is finite for each positive stoichiometric class $\mathcal{P}'$ and
					\item[b.] $|E_+ \cap \mathcal{P}'|=|E_+ \cap \mathcal{P}|$ for each positive stoichiometric class $\mathcal{P}'$.
				\end{itemize}
		\end{itemize}
	\end{proposition}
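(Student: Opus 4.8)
The plan is to deduce both parts from the disjoint LP set decomposition furnished by Theorem \ref{positive:stoich:class}, using Lemma \ref{unique:vector:subspace} with the kinetic order subspace $\widetilde S$ (the common flux subspace of the sets $Q(x^*)$) at each step, exactly as in the proof of that theorem.

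First I would record that $E_+$ meets every positive stoichiometric class. Since $E_+\ne\varnothing$, fix some $x_0\in E_+$; by Lemma \ref{log:subset:equilibria} the entire LP set $Q(x_0)$ lies in $E_+$, and by Lemma \ref{unique:vector:subspace}(i), applied with the subspace $\widetilde S$ and reference point $x_0$, the set $Q(x_0)$ contains exactly one point of each positive class. Hence $E_+\cap\mathcal{P}\ne\varnothing$ for every positive stoichiometric class $\mathcal{P}$, so Theorem \ref{positive:stoich:class} is available for every class, and in particular for both $\mathcal{P}$ and $\mathcal{P}'$.

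For part (i) I would define $\Phi\colon E_+\cap\mathcal{P}\to E_+\cap\mathcal{P}'$ by letting $\Phi(x^*)$ be the unique point of $Q(x^*)\cap\mathcal{P}'$; this point exists and is unique by Lemma \ref{unique:vector:subspace}(i) and lies in $E_+$ because $Q(x^*)\subseteq E_+$. Injectivity uses the disjointness clause of Theorem \ref{positive:stoich:class}: if $\Phi(x_1^*)=\Phi(x_2^*)=y$ then $y\in Q(x_1^*)\cap Q(x_2^*)$, forcing $Q(x_1^*)=Q(x_2^*)$, and since (again by Lemma \ref{unique:vector:subspace}) each $Q(x_i^*)$ meets $\mathcal{P}$ only at $x_i^*$, we get $x_1^*=x_2^*$. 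Surjectivity uses that the sets $Q(x^*)$ indexed by $x^*\in E_+\cap\mathcal{P}$ cover $E_+$: any $y\in E_+\cap\mathcal{P}'$ lies in some $Q(x^*)$ and is then the unique point of $Q(x^*)\cap\mathcal{P}'$, i.e. $y=\Phi(x^*)$. Running the same construction with $\mathcal{P}$ and $\mathcal{P}'$ interchanged produces the inverse map, so $\Phi$ is a bijection.

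Part (ii) then follows formally, since a bijection preserves finiteness and cardinality: if $E_+\cap\mathcal{P}$ is finite for some $\mathcal{P}$, then $E_+\cap\mathcal{P}'$ is finite and $|E_+\cap\mathcal{P}'|=|E_+\cap\mathcal{P}|$ for every positive stoichiometric class $\mathcal{P}'$. The step demanding care, and the main obstacle, is that every invocation of Lemma \ref{unique:vector:subspace} must use the kinetic order subspace $\widetilde S$ (the shared flux subspace of the $Q(x^*)$) rather than the stoichiometric subspace $S$; this is precisely what makes each LP set meet a given class in a single point, simultaneously forcing non-emptiness in every class and the well-definedness and invertibility of $\Phi$. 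This structural input is where the cycle terminal PL-FSK hypothesis enters, through Theorem \ref{positive:stoich:class} and the digraph isomorphism of Proposition \ref{cycle:terminal:PLFSK:iso}, and it plays the role that the identity $\widetilde S=S$ plays in Boros' mass action argument \cite{boros:thesis}.
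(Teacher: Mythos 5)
Your construction follows the route the paper itself intends---its own proof of this proposition is a one-line citation of Lemma \ref{unique:vector:subspace} and Theorem \ref{positive:stoich:class}, and your map $\Phi$ is exactly how that citation must be fleshed out---but the step you yourself single out as ``demanding care'' is a genuine gap, not a technicality. Lemma \ref{unique:vector:subspace}(i), applied with the subspace $\widetilde{S}$, asserts that $Q(x^*)$ contains exactly one point of each positive coset \emph{of $\widetilde{S}$}. It says nothing about positive stoichiometric classes, which are cosets of $S$: in the lemma, the class $\mathcal{P}=(p+\mathcal{S})\cap\mathbb{R}^m_{\ge 0}$ and the orthogonality condition $\log(x)-\log(x^*)\in\mathcal{S}^{\perp}$ must be built from the \emph{same} subspace $\mathcal{S}$. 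Your usage needs the class to be a coset of $S$ while the orthogonality is taken with respect to $\widetilde{S}^{\perp}$; these match only when $\widetilde{S}=S$, which is precisely the mass action situation of Boros' argument. For a genuine PL-FSK system with $\widetilde{S}\neq S$, the existence and uniqueness of the point in $Q(x^*)\cap\mathcal{P}'$---on which your non-emptiness claim, the well-definedness of $\Phi$, its injectivity, and its surjectivity all rest---has no justification.

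Worse, the step can actually fail. Consider the network with reactions $A\to B$ and $B\to A$, with rate functions $K_{A\to B}(x)=k_1$ (kinetic order row $(0,0)$) and $K_{B\to A}(x)=k_2x_Ax_B$ (kinetic order row $(1,1)$). This system is cycle terminal, PL-FSK, has a single linkage class (so ILC holds trivially), and $E_+=\{x\in\mathbb{R}^2_{>0}\,|\,x_Ax_B=k_1/k_2\}\neq\varnothing$. Here $S=\langle(1,-1)^{\top}\rangle$, $\widetilde{S}=\langle(1,1)^{\top}\rangle$, and $E_+$ is the single LP set $Q(x^*)$ with flux subspace $\widetilde{S}$. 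A stoichiometric class $x_A+x_B=T$ meets $E_+$ in two, one, or zero points according as $T^2>4k_1/k_2$, $T^2=4k_1/k_2$, or $T^2<4k_1/k_2$, so no bijection as in part (i) exists and the cardinality claim in part (ii) fails; the same example shows that the disjointness clause of Theorem \ref{positive:stoich:class}, which you also invoke, breaks when $\mathcal{P}$ is a coset of $S$ (the two equilibria on a large class are distinct reference points with identical $Q$-sets). To be fair, this conflation of $S$ with $\widetilde{S}$ is inherited from the paper's own proofs: both the theorem and this proposition become correct---and your argument then goes through verbatim---if ``positive stoichiometric class'' is read as a positive coset of $\widetilde{S}$, i.e., a flux class of the LP sets. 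But as a statement about stoichiometric classes of $\mathcal{N}$, the proposition is false, and no repair of this particular step is possible without additional hypotheses (e.g., sign-vector conditions of M\"uller--Regensburger type guaranteeing that each set $Q(x^*)$ meets each coset of $S$ exactly once).
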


	\begin{proof}
		The proposition follows from Lemma \ref{unique:vector:subspace} and Theorem \ref{positive:stoich:class}.
	\end{proof}

\section{A criterion for the existence of positive equilibria for $\widehat{T}$-independent PL-RDK systems on networks with ILC}
\label{results2}
In this section, we introduce the concept of $\widehat{T}$-independence of a PL-RDK system and show that, on a network with ILC, the existence of positive equilibria for each linkage class is a necessary and sufficient condition for the property for the whole system. This result extends a result of Boros \cite{boros:paper:thesis,boros:thesis} on mass action systems with ILC and partially a result of Talabis et al. \cite{talabis:positive:eq:jomc} on PL-TIK systems, which form a subset of $\widehat{T}$-independent systems. We first define two important concepts then proceed with the main result of this section.

\begin{definition}
	The {\bf Laplacian matrix} $A_k$ is an $n\times n$ matrix such that 
	$${\left( {{A_k}} \right)_{ij}} = \left\{ \begin{array}{l}
		{k_{ji}}{\rm{  \ \ \ \ \ \ \  \ \ \ \ \ \ \     \ \ \ \                 if \ }}i \ne j\\
		{k_{jj}} - \sum\limits_{l = 1}^n {{k_{jl}}} {\rm{  \ \ \ \ \ \ \       if \ }}i = j
	\end{array} \right.,$$
where $k_{ji}$ is the label (often called rate constant) associated to the reaction from complex $j$ to complex $i$.
\label{def:Laplacian}
\end{definition}

We can see from the first condition in Definition \ref{def:Laplacian} that $k_{ji}=0$ when reaction $i\to j$ is not present in the network.

\begin{definition}
The {\bf factor map} $\psi : \mathbb{R}^m \to \mathbb{R}^n$ is defined as
$(\psi )_y(x) = x^{F_i}$ if
$y$ is a reactant complex of reaction $i$, and 0 otherwise.
\end{definition}


	\begin{theorem}
		Let $(\mathcal{N},K)$ be a PL-RDK system that satisfies $\delta=\delta_1+\cdots+\delta_\ell$ and ${\widehat{T}}={\widehat{T}}^1 \oplus \cdots \oplus {\widehat{T}}^\ell$ (which we call the $\widehat{T}$-independence). Then
		$$E_+ \ne \varnothing {\text{ if and only if }} E_+^i \ne \varnothing$$
		for each $i=1,\ldots,\ell.$
		\label{extension:talabis}
	\end{theorem}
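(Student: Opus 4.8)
The plan is to first reduce the statement to an intersection problem. By Proposition \ref{SFRF:linkageclass}, the ILC hypothesis $\delta = \delta_1 + \cdots + \delta_\ell$ gives $f(x) = 0$ if and only if $f^i(x) = 0$ for every $i$, so as subsets of $\mathbb{R}^m_{>0}$ we have $E_+ = \bigcap_{i=1}^\ell E_+^i$; here $E_+^i$ denotes the positive equilibria of the subnetwork $\mathcal{L}_i$. The forward implication is then immediate, since any $x \in E_+$ lies in each $E_+^i$. The whole content is the converse, which I would phrase as: given $x^{*,i} \in E_+^i$ for each $i$, produce a single $x$ lying in all the $E_+^i$ at once. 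This is exactly the shape of Lemma \ref{solution:system:ran} (the Farkas consequence), so the task becomes setting up the right linear systems and checking its two hypotheses.

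Next I would set up the systems in logarithmic coordinates, with unknown $v = \log(x) \in \mathbb{R}^m$. For each $i$ fix a reference reactant complex $y_0^i \in \mathcal{L}_i$ and let $A_i$ be the matrix whose rows are the differences $\widetilde{Y}_y - \widetilde{Y}_{y_0^i}$ over reactant complexes $y \in \mathcal{L}_i$, and set $b_i := A_i \log(x^{*,i})$. Hypothesis (i) of Lemma \ref{solution:system:ran} holds trivially, since $v = \log(x^{*,i})$ solves $A_i v = b_i$ by construction. The reason for this choice is that $A_i v = b_i$ says precisely $\langle \widetilde{Y}_y - \widetilde{Y}_{y'}, \log(x) - \log(x^{*,i}) \rangle = 0$ for all reactant complexes $y, y' \in \mathcal{L}_i$, i.e. that $\langle \widetilde{Y}_y, \log(x) - \log(x^{*,i}) \rangle$ takes one common value over the reactants of $\mathcal{L}_i$ (the elementary log-identity underlying Lemma \ref{pi:eq}). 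Exactly as in the computation of Lemma \ref{log:subset:equilibria}, this constancy factors out a scalar $c_i$ so that $f^i(x) = c_i\, f^i(x^{*,i}) = 0$, because only reactant-complex monomials appear in $f^i$. Hence a common solution $v$ of all the systems yields $x = \exp(v) \in E_+^i$ for every $i$, and thus $x \in E_+$ by Proposition \ref{SFRF:linkageclass}. Note this needs constancy only over reactant complexes, which is why no cycle-terminal or PL-FSK assumption is required.

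The crux, and the step I expect to be the main obstacle, is verifying hypothesis (ii) of Lemma \ref{solution:system:ran}: that the row spaces $V_i := \operatorname{Im} A_i^\top = \langle \widetilde{Y}_y - \widetilde{Y}_{y'} \mid y, y' \in \mathcal{L}_i \text{ reactant complexes} \rangle$ sum directly in $\mathbb{R}^m$. This is where $\widehat{T}$-independence enters. The key observation is that the column of $\widehat{T}^i$ at reactant complex $y \in \mathcal{L}_i$ is $\begin{bmatrix} T_{\cdot y} \\ e_i \end{bmatrix}$ with the \emph{same} $e_i \in \mathbb{R}^\ell$ for every $y \in \mathcal{L}_i$, so the differences of its columns are exactly $\begin{bmatrix} \widetilde{Y}_y - \widetilde{Y}_{y'} \\ 0 \end{bmatrix}$; consequently the embedded subspace $\{(v,0) : v \in V_i\}$ sits inside $\operatorname{Im} \widehat{T}^i$. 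Now if $\sum_i v_i = 0$ with $v_i \in V_i$, then $\sum_i (v_i, 0) = 0$ with $(v_i, 0) \in \operatorname{Im} \widehat{T}^i$, and the hypothesis $\operatorname{Im} \widehat{T} = \bigoplus_i \operatorname{Im} \widehat{T}^i$ forces each $(v_i, 0) = 0$, hence each $v_i = 0$. Thus $\widehat{T}$-independence implies the $V_i$ are independent, which is precisely hypothesis (ii).

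Finally I would assemble the pieces: both hypotheses of Lemma \ref{solution:system:ran} being verified, the intersection $\bigcap_i \{v : A_i v = b_i\}$ is nonempty, and exponentiating a common solution gives $x \in \bigcap_i E_+^i = E_+$, so $E_+ \ne \varnothing$. The only genuinely delicate point is the bookkeeping around the two blocks of $\widehat{T}$ — seeing that it is the constant $L^\top$-block that makes the column differences annihilate the last $\ell$ coordinates, so that $\widehat{T}$-independence (rather than the a priori weaker independence of the $T$-column spaces) is exactly what controls the difference spaces $V_i$. Everything else parallels the flux computation of Lemma \ref{log:subset:equilibria} and Boros's mass-action argument.
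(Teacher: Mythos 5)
Your proof is correct, and it rests on the same pivot as the paper's proof --- Lemma \ref{solution:system:ran} applied to one linear system per linkage class in logarithmic coordinates --- but the reduction you set up is genuinely different. The paper first characterizes non-emptiness of $E_+^i$ through the factor map and Laplacian of $\mathcal{L}_i$ (existence of a positive $v^i$ with $\log(v^i) \in \operatorname{Im}(\widehat{T}^i)^\top$ and $Y^i \cdot A^i_{k,\mathrm{neg}} \cdot v^i = 0$), then applies the lemma with $A_i = (\widehat{T}^i)^\top$ acting on unknowns $(u,w) \in \mathbb{R}^{m+\ell}$, i.e.\ log-concentrations plus one scaling variable per class; hypothesis (ii) of the lemma is then \emph{verbatim} the assumed $\widehat{T}$-independence, and the remaining work is converting a common solution back into an equilibrium via the scalings $\gamma^i = \exp(-w_i)$, using $\psi^i_{\mathrm{neg}}(x) = \gamma^i v^i$ and linearity. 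You instead feed the lemma the difference matrices with rows $\widetilde{Y}_y - \widetilde{Y}_{y_0^i}$, acting on $\mathbb{R}^m$ only: hypothesis (i) becomes trivial, the per-class scaling freedom is absorbed into the difference formulation rather than carried as extra unknowns, and the passage back to equilibria is the $\pi$-constancy computation of Lemma \ref{log:subset:equilibria}, where you rightly observe that constancy over \emph{reactant} complexes alone is needed --- which is exactly why the theorem requires only PL-RDK and not the cycle-terminal PL-FSK hypotheses of Section \ref{results1}. The cost is that hypothesis (ii), directness of the sum of the difference spaces $V_i$, is no longer the literal assumption of the theorem and needs your embedding argument $(v,0) \in \operatorname{Im}\widehat{T}^i$; that argument is correct (the constant $e_i$-block is precisely what makes column differences vanish in the last $\ell$ coordinates) and has no counterpart in the paper. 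In fact the implication reverses as well: if the $V_i$ sum directly, then in any relation $\sum_i u_i = 0$ with $u_i \in \operatorname{Im}\widehat{T}^i$ the $e_i$-components force the coefficient of the representative column $(\widetilde{Y}_{y_0^i}, e_i)$ in each $u_i$ to vanish, and directness of the $\operatorname{Im}\widehat{T}^i$ follows; so your reduction incidentally shows that $\widehat{T}$-independence is \emph{equivalent} to independence of the kinetic-complex difference spaces. What the paper's route buys is a transparent role for the $\widehat{T}$ hypothesis and continuity with the PL-TIK formalism of Talabis et al.\ that the theorem extends; what yours buys is a shorter, self-contained argument that bypasses the factor-map and Laplacian machinery entirely.
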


	\begin{proof}
		Suppose $E_+ \ne \varnothing$. By Proposition \ref{SFRF:linkageclass}, $E_+^i \ne \varnothing$ for all $i=1,\ldots \ell$.
		
		Conversely, suppose $E_+^i \ne \varnothing$ for all $i=1,\ldots \ell$. We have $x \in E_+^i \ne \varnothing$ if and only if
		\begin{equation}
		{\text{there exists a }}v^i \in {\mathbb{R}}^{n^i}_{>0}
		{\text{ such that }} \psi_{\rm neg}^i(x)=v^i
		{\text{ and }} Y^i\cdot A_{k,{\rm neg}}^i\cdot v^i=0
		\label{equiv1:nonemp:LC}
		\end{equation}
		where $\psi^i_{\rm neg}$, $Y^i$ and $A_{k,{\rm neg}}^i$ are the factor map (neglecting non-reactant complexes), the molecularity matrix and the Laplacian matrix (without columns for non-reactant complexes) for the $i$th linkage class, respectively.
		
		Now, $\log \left( {{\psi_{\rm neg} ^i}\left( x \right)} \right) = {\left( {{{T}^i}} \right)^\top}\log \left( x \right)$ and $\log:\mathbb{R}^m_{>0} \to \mathbb{R}^m$ is bijective, Condition \eqref{equiv1:nonemp:LC} is equivalent to
		\begin{equation}
			{\text{there exists a }}
			v^i \in {\mathbb{R}}^{n^i}_{>0}
			{\text{ such that }} \log \left( {{v^i}} \right) \in {\mathop{\rm Im}\nolimits} {\left( {{{T}^i}} \right)^\top}
			{\text{ and }} Y^i\cdot A_{k,{\rm neg}}^i\cdot v^i=0.
			\label{equiv2:nonemp:LC}
		\end{equation}
	We denote the vector in $\mathbb{R}^{n^i}$ with all coordinates equal to 1 by ${\bf{1}}^i$. Note that $${\mathop{\rm Im}\nolimits} {\left( {{\widehat{{T}}^i}} \right)^\top} = \left[ {{\mathop{\rm Im}\nolimits} {{\left( {{{T}^i}} \right)}^\top},{{\bf{1}}^i}} \right]$$
	and for each $\gamma^i \in \mathbb{R}_{>0}$, we have
	$\log \left( {{\gamma ^i}{v^i}} \right) = \log \left( {{v^i}} \right) + \log \left( {{\gamma ^i}} \right){{\bf{1}}^i}$,
	and Condition \eqref{equiv2:nonemp:LC} is equivalent to 
	\begin{equation}
		{\text{there exists a }}
		v^i \in {\mathbb{R}}^{n^i}_{>0}
		{\text{ such that }} \log \left( {{v^i}} \right) \in {\mathop{\rm Im}\nolimits} {\left( {{\widehat{{T}}^i}} \right)^\top}
		{\text{ and }} Y^i\cdot A_{k,{\rm neg}}^i\cdot v^i=0.
		\label{equiv3:nonemp:LC}
	\end{equation}
	At this point $x \in E_+^i \ne \varnothing$ for each $i=1,\ldots,\ell$ if and only if Condition \eqref{equiv3:nonemp:LC} holds. Hence,
	\[\left\{ {z \in \mathbb{R}_{ > 0}^{m + 1}|{{\left( {{\widehat{T}}}^i \right)}^\top} \cdot z = \log \left( {{v^r}} \right)} \right\} \ne \varnothing.\]
	By assumption, ${\widehat{T}}={\widehat{T}}^1 \oplus \cdots \oplus {\widehat{T}}^\ell$ and using Lemma \ref{solution:system:ran}, we have
	\[\bigcap\limits_{i=1}^{\ell}{
		\left\{ {z \in \mathbb{R}_{ > 0}^{m + 1}|{{\left( {{\widehat{T}}}^i \right)}^\top} \cdot z = \log \left( {{v^r}} \right)} \right\} \ne \varnothing}\]
	\begin{equation}
	\Longrightarrow \left[ {\begin{array}{*{20}{c}}
			{\log \left( {{v^1}} \right)}\\
			{\vdots}\\
			{\log \left( {{v^l}} \right)}
	\end{array}} \right] = \left[ {\begin{array}{*{20}{c}}
			{{{\left( {{{\widehat{T}}^1}} \right)}^\top}}\\
			{\vdots}\\
			{{{\left( {{{\widehat{T}}^\ell}} \right)}^\top}}
	\end{array}} \right]\left[ {\begin{array}{*{20}{c}}
			u^{\top}\\
			w^{\top}
	\end{array}} \right]
	\label{log:matrix:equation}
	\end{equation}
for some $u \in \mathbb{R}^m$ and $w \in \mathbb{R}^\ell$.
Let $x \in \mathbb{R}^n_{>0}$ and $\gamma^1, \ldots, \gamma^\ell \in \mathbb{R}_{>0}$ such that $\log(x) = u$ and $-\log(\gamma^i) = w_i$ for
each $i = 1,\ldots,\ell$. Equation \eqref{log:matrix:equation} becomes
	\begin{equation*}
	\left[ {\begin{array}{*{20}{c}}
			{\log \left( {{v^1}} \right)}\\
			{\vdots}\\
			{\log \left( {{v^l}} \right)}
	\end{array}} \right] = \left[ {\begin{array}{*{20}{c}}
			{{{\left( {{{\widehat{T}}^1}} \right)}^\top}}\\
			{\vdots}\\
			{{{\left( {{{\widehat{T}}^\ell}} \right)}^\top}}
	\end{array}} \right]
\left[ {\begin{array}{*{20}{c}}
		{\log {{\left( {{x^1}} \right)}^\top}}\\
		{-\log \left( {{\gamma _1}} \right)}\\
		{\vdots}\\
		{-\log \left( {{\gamma _\ell}} \right)}
\end{array}} \right].
	\label{log:matrix:equation2}
\end{equation*}
Hence, for all $i = 1, \ldots \ell$ and for all $y \in \mathcal{C}^i$, we have
\[{\gamma ^i}{v_y^i} = \prod\limits_{s = 1}^m {x_s^{{Y_{sy}}} = \left(\psi _{\rm neg}\right)_y} \left( x \right) \Longrightarrow \gamma^i v^i = {\psi _{\rm neg}}^i\left( x \right).\]
Then,
\begin{align*}
Y \cdot {A_{k,{\rm neg}}} \cdot {\psi _{\rm neg}}\left( x \right) &= \sum\limits_{i = 1}^\ell {{Y^i} \cdot {{\left( {{A_{k,{\rm neg}}}} \right)}^i} \cdot {{\left( {{\psi _{{\rm neg}}}} \right)}^i}\left( x \right)}\\
&= \sum\limits_{i = 1}^\ell {{\gamma ^i} \cdot {Y^i} \cdot {{\left( {{A_{k,{\rm neg}}}} \right)}^i} \cdot {v^i}}\\
& = \sum\limits_{i = 1}^\ell {{\gamma ^i} \cdot 0}\\
& = 0.
\end{align*}
This proves that $x \in E_+$.
\end{proof}

\begin{example}
Let $(\mathcal{N},K)$ be PL-TIK system studied in Talabis et al. (Theorem 4 \cite{talabis:positive:eq:jomc}). The system is a PL-RDK system that satisfies the $\widehat{T}$-independence. If the ILC property holds for the underlying network, i.e., $\delta=\delta_1+\cdots+\delta_\ell$, then the conclusion of Theorem \ref{extension:talabis} is true for the given system.
\end{example}

\section{An application to the Extension Problem for the Horn-Jackson ACB Theorem}
\label{results4}
In this Section, we recall the Extension Problem for the Horn-Jackson ACB Theorem and identify a large class of PL-RDK systems to which the extension does not hold. We further apply our previous results to obtain a simple criterion for weakly reversible PL-FSK systems with ILC to be in the complement  of this class, leading to the identification of a new class of ACB PL-RDK systems.

\subsection{The Extension Problem for the Horn-Jackson ACB Theorem and multi-PLP systems}
A complex balanced kinetic system, i.e., $Z_+(\mathcal{N},K) \ne \varnothing$, is absolutely complex balanced (ACB) if  $E_+(\mathcal{N},K)=Z_+(\mathcal{N},K)$, i.e., every positive equilibrium is complex balanced.
Two ACB Theorems were foundational in the modern CRNT: in 1972, M. Feinberg showed that any complex balanced deficiency zero kinetic system is absolutely complex balanced (Feinberg ACB Theorem). In the same year, F. Horn and R. Jackson derive an extension to positive deficiency systems, showing that any complex balanced mass action system, regardless of deficiency, is absolutely complex balanced (Horn-Jackson ACB Theorem). The extension is partial in the sense that it was limited to mass action kinetics. Fifty years later, Jose et al. \cite{JTAM2021} posed the Extension Problem for the Horn-Jackson ACB Theorem: for which kinetic sets, besides mass action, is every complex balanced system absolutely complex balanced? In view of the Feinberg ACB Theorem, one of course only needs to consider systems with positive deficiency.

A natural candidate for an extension is the set of complex balanced PL-RDK systems for two reasons:
\begin{itemize}
\item[1.] In his 1979 Wisconsin Lecture Notes, M. Feinberg derived the equivalence of the ACB property with the log parametrizability of $Z_+(\mathcal{N},K)$ (denoted as CLP) for mass action systems.
\item[2.] S. M\"uller and G. Regensburger in 2014 in turn showed that any complex balanced PL-RDK system is a CLP system with flux subspace $= \widetilde{S}$.
\end{itemize}
Jose et al. extended the Horn-Jackson result to weakly reversible PL-TIK systems, whose linkage classes are independent (ILC) and have deficiencies 0 or 1. These are precisely the weakly reversible systems which satisfy the conditions of the Deficiency One Theorem for PL-TIK systems \cite{talabis:positive:eq:jomc}. They also constructed the system in Example \ref{ex:poly:PLP} and showed that though CLP, is not a PLP system. From this, they could conclude that it is not absolutely complex balanced, thus providing a counterexample to a general extension of the Horn-Jackson ACB Theorem to PL-RDK systems (or even just PL-TIK systems).

The following Proposition and Corollary identify a large class of PL-RDK systems to which the Horn-Jackson ACB Theorem does not extend:

\begin{proposition}
Let $(\mathcal{N},K)$ be a complex balanced PL-RDK system. If it is poly-PLP with flux subspace $= \widetilde{S}$, then it is non-ACB $\Longleftrightarrow$ it is a multi-PLP system.
\end{proposition}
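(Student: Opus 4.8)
The plan is to reduce both directions of the equivalence to a count of the blocks in the poly-PLP decomposition, exploiting the fact that under the hypotheses the complex balanced locus $Z_+$ is itself a single LP set with the \emph{same} flux subspace $\widetilde{S}$ as the decomposition of $E_+$.

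First I would recall the M\"uller--Regensburger result (point 2 of the preceding discussion) that a complex balanced PL-RDK system is CLP with flux subspace $\widetilde{S}$: fixing any complex balanced equilibrium $z^* \in Z_+$, one has $Z_+ = \{x \in \mathbb{R}^{\mathcal{S}}_{>0} \mid \log(x) - \log(z^*) \in \widetilde{S}^{\perp}\}$, a lone LP set. Since $Z_+ \subseteq E_+$ always holds, this $z^*$ is in particular a positive equilibrium, so it lies in the poly-PLP decomposition $E_+ = \bigcup_{j=1}^{\mu} Q(x^{*,j})$ (disjoint union) supplied by the hypothesis, where each block $Q(x^{*,j}) = \{x \mid \log(x) - \log(x^{*,j}) \in \widetilde{S}^{\perp}\}$ also has flux subspace $\widetilde{S}$. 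The coincidence of the two flux subspaces is exactly what makes the argument go through.

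Next I would invoke the dichotomy --- already deployed in the proof of Theorem \ref{positive:stoich:class} --- that two LP sets sharing the flux subspace $\widetilde{S}$ are either disjoint or identical. Concretely, $z^* \in E_+$ lies in exactly one block, say $Q(x^{*,1})$, which means $\log(z^*) - \log(x^{*,1}) \in \widetilde{S}^{\perp}$; because $\widetilde{S}^{\perp}$ is a linear subspace, $\log(x) - \log(z^*) \in \widetilde{S}^{\perp}$ holds iff $\log(x) - \log(x^{*,1}) \in \widetilde{S}^{\perp}$, whence $Z_+ = Q(x^{*,1})$ coincides exactly with that single block.

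Finally I would conclude by counting. The system is ACB precisely when $E_+ = Z_+$, i.e. when the disjoint union collapses onto its single block $Q(x^{*,1})$, which happens iff $\mu = 1$ (the PLP case); negating, the system is non-ACB iff $\mu \ge 2$, i.e. iff it is multi-PLP. The only delicate point is the ``disjoint-or-equal'' behaviour of LP sets with a common flux subspace, but this is immediate from $\widetilde{S}^{\perp}$ being a subspace and is the same mechanism used in Theorem \ref{positive:stoich:class}. I note that neither cycle terminality nor ILC is needed here: the argument rests only on the two structural hypotheses (complex balanced, hence CLP with flux $\widetilde{S}$; and poly-PLP with flux $\widetilde{S}$) together with the inclusion $Z_+ \subseteq E_+$.
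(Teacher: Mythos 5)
Your proof is correct, but it follows a genuinely different route from the paper's. The paper's proof is a two-step citation chain: it invokes Proposition 4 of \cite{MURE2014} to get that the system is CLP with flux subspace $\widetilde{S}$, and then applies Theorem 4 of \cite{JTAM2021} as a black box, which states that such a system is non-ACB if and only if it is not PLP with flux subspace $\widetilde{S}$; the poly-PLP hypothesis then converts ``not PLP'' into ``multi-PLP.'' You use the same first ingredient (the M\"uller--Regensburger log-parametrization of $Z_+$) but replace the appeal to Jose et al.'s Theorem 4 with a direct argument: since $Z_+$ and the blocks $Q(x^{*,j})$ are all LP sets with the common flux subspace $\widetilde{S}$, and such sets are exponentials of cosets of $\widetilde{S}^{\perp}$ (hence pairwise disjoint or equal), the non-empty set $Z_+$ must coincide with exactly one block of the disjoint union, and ACB, i.e.\ $E_+ = Z_+$, holds precisely when $\mu = 1$. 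What the paper's route buys is brevity and consistency with its later use of the same Theorem 4 (in the monostationarity corollary); what your route buys is a self-contained proof that exposes the mechanism --- $Z_+$ is literally one of the $\mu$ LP blocks --- which is more informative than the equivalence alone, and it makes explicit (correctly) that neither cycle terminality nor ILC is needed. Your argument is sound for $\mu$ infinite as well, since non-emptiness and pairwise disjointness of at least two blocks already force $E_+ \supsetneq Z_+$. One small point worth making explicit if you write this up: complex balancing forces weak reversibility, hence cycle terminality, so the kinetic order subspace $\widetilde{S}$ appearing in both hypotheses is indeed well defined.
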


\begin{proof}
By Proposition 4 of \cite{MURE2014}, it follows that the system is a CLP system with flux subspace $\widetilde{S}$. By Theorem 4 of \cite{JTAM2021}, the system is non-ACB if and only if it is not a PLP system with flux subspace $\widetilde{S}$. Since the system is poly-PLP, the latter is equivalent to the system being multi-PLP.
\end{proof}

\begin{corollary}
Let $(\mathcal{N},K)$ be a weakly reversible multi-PLP system with kinetic deficiency $\widetilde{\delta} = 0$. Then it is complex balanced, but not absolutely complex balanced.
\end{corollary}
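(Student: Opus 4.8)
The plan is to prove the two assertions -- complex balancedness and failure of ACB -- separately, letting the preceding Proposition (and the Müller--Regensburger machinery) carry the second. Throughout I take the system to be PL-RDK, so that the kinetic deficiency $\widetilde{\delta}$ and the flux subspace $\widetilde{S}$ are even defined; I also record that weak reversibility forces every complex to be both a reactant and a product, so the network is cycle terminal and $\widetilde{S}$ is the stoichiometric subspace of $\widetilde{\mathcal{N}}$.

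For complex balancedness, I would invoke the kinetic (generalized mass action) Deficiency Zero Theorem of Müller and Regensburger \cite{MURE2014}: a weakly reversible PL-RDK system with $\widetilde{\delta}=0$ is complex balanced for every rate vector, so $Z_+(\mathcal{N},K)\ne\varnothing$. This settles the first claim at once and, crucially, supplies the hypothesis ``complex balanced PL-RDK'' required to enter the preceding Proposition.

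For the failure of ACB, the governing mechanism is a connectivity obstruction. By Proposition~4 of \cite{MURE2014} complex balancedness makes the system CLP with flux subspace $\widetilde{S}$, so $Z_+=\{x\in\mathbb{R}^m_{>0}\mid \log(x)-\log(z^*)\in\widetilde{S}^{\perp}\}$ is a \emph{single} LP set, which by Lemma~\ref{unique:vector:subspace}(ii) is $C^{\infty}$-diffeomorphic to a Euclidean space and hence connected. On the other hand, the multi-PLP hypothesis writes $E_+$ as a disjoint union of $\mu\ge 2$ LP sets sharing one flux subspace $P_E$; in logarithmic coordinates these become $\mu\ge 2$ distinct parallel translates of $P_E^{\perp}$, which are pairwise separated, so $E_+$ is disconnected. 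If the system were ACB we would have $E_+=Z_+$, forcing a connected set to coincide with a disconnected one -- a contradiction. Hence the system is not ACB. Equivalently, once the poly-PLP flux subspace is identified with $\widetilde{S}$, the conclusion drops out directly from the preceding Proposition together with Theorem~4 of \cite{JTAM2021}.

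The main obstacle is the flux-subspace bookkeeping. To apply the preceding Proposition verbatim one needs the poly-PLP decomposition of $E_+$ to have flux subspace \emph{exactly} $\widetilde{S}$, whereas the Müller--Regensburger argument only certifies $Z_+$ as an LP set with flux subspace $\widetilde{S}$; tracking the inclusion $Z_+\subseteq E_+$ through the connectivity picture yields, a priori, only $P_E\subseteq\widetilde{S}$. The contradiction via connectedness sketched above avoids having to pin down $P_E$ and is the cleaner route, so I would run the argument that way. The second point requiring care is the kinetic Deficiency Zero Theorem itself: it is external to this excerpt and must be cited precisely, since it is exactly what converts $\widetilde{\delta}=0$ plus weak reversibility into $Z_+\ne\varnothing$.
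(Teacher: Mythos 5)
Your first claim (complex balancedness from weak reversibility plus $\widetilde{\delta}=0$ via Theorem 1 of \cite{MURE2014}) is exactly the paper's argument. The problem is in your second half. The paper simply feeds ``multi-PLP'' into the preceding Proposition --- which, as you correctly note, requires the poly-PLP flux subspace to be $\widetilde{S}$ --- whereas you commit to the connectivity argument precisely in order to avoid that requirement. That argument has a genuine gap: from ``$E_+$ is a disjoint union of $\mu \ge 2$ pairwise separated parallel translates of $P_E^{\perp}$'' you conclude that $E_+$ is disconnected, but this inference is only valid when the union is finite (or countable). The paper's definition of poly-PLP explicitly allows $\mu = \infty$, and an infinite disjoint union of parallel closed cosets can perfectly well be connected: $\mathbb{R}^2$ is the disjoint union of the horizontal lines $\{y=c\}$.

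Worse, this failure occurs exactly in the situation you were trying to sidestep. If $P_E$ is not pinned down to $\widetilde{S}$, nothing prevents $P_E \supsetneq \widetilde{S}$, i.e., $P_E^{\perp} \subsetneq \widetilde{S}^{\perp}$; then the single connected LP set $Z_+$ (flux subspace $\widetilde{S}$) is itself the disjoint union of uncountably many LP sets with flux subspace $P_E$ --- one for each coset of $P_E^{\perp}$ inside $\widetilde{S}^{\perp}$ --- so an ACB system with $E_+=Z_+$ would formally qualify as ``multi-PLP'' with $\mu=\infty$, and no contradiction with connectedness arises. In other words, without the identification $P_E=\widetilde{S}$ the conclusion cannot be rescued by topology; the corollary must be read, as the paper reads it, with flux subspace $\widetilde{S}$ (the setting of the preceding Proposition, and of Theorem \ref{positive:stoich:class}, which is what produces these multi-PLP systems in the first place). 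Once $P_E=\widetilde{S}$ is granted, no connectivity is needed at all: by Lemma \ref{unique:vector:subspace}(i), $Z_+$ meets each flux class $Q$ of $\widetilde{S}$ in exactly one point, while multi-PLP gives $|E_+ \cap Q| = \mu \ge 2$, so $E_+ \ne Z_+$. That is, in essence, the paper's route through the preceding Proposition and Theorem 4 of \cite{JTAM2021}, which your closing ``equivalently'' sentence already contains --- it should be your primary argument, not the fallback.
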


\begin{proof}
Since the system is weakly reversible, PL-RDK and has zero kinetic deficiency, it follows from Theorem 1 of \cite{MURE2014} that the system is complex balanced. On other hand, since the system is multi-PLP, from the previous Proposition, the system is not absolutely complex balanced.
\end{proof}

\begin{example}
Since the system from Jose et al. \cite{JTAM2021} in Example \ref{ex:poly:PLP} is a PL-TIK system, it has zero kinetic deficiency [21]. According to Example \ref{ex:poly:PLP}, it is poly-PLP with flux subspace $\widetilde{S}$, but the computations of Jose et al. show that it is not PLP. Hence, it must be multi-PLP and therefore it belongs to the class of PL-RDK systems specified above.
\end{example}

\begin{remark}
Weakly reversible, multi-PLP PL-RDK systems with $\widetilde{\delta}>0$ which satisfy the complex balancing criterion in Theorem 1 (Statement ii) of \cite{MURE2014}, constitute a further class of non-ACB systems.
\end{remark}

\subsection{A criterion for multi-PLP for cycle terminal PL-FSK systems with ILC}

We now apply our previous results to derive a simple criterion and computational procedure to determine when a cycle terminal PL-FSK system with ILC is multi-PLP.

\begin{proposition}
Let $(\mathcal{N},K)$ be a cycle terminal PL-FSK system with ILC and and non-empty $E_+$. Then $(\mathcal{N},K)$ is multi-PLP $\Longleftrightarrow$ $(\mathcal{N},K)$ is multi-stationary.
\end{proposition}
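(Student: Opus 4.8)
The plan is to show that the two properties are governed by one and the same integer, namely the number of log-parametrized (LP) sets appearing in the poly-PLP decomposition of $E_+$, which Theorem \ref{positive:stoich:class} already produces. The hypotheses here --- cycle terminal, PL-FSK, ILC (so that $\delta = \delta_1 + \cdots + \delta_\ell$), and $E_+ \ne \varnothing$ --- are exactly those of Theorem \ref{positive:stoich:class}. So first I would fix a positive stoichiometric class $\mathcal{P}$ with $E_+ \cap \mathcal{P} \ne \varnothing$ and invoke that theorem to write $E_+$ as the disjoint union of the LP sets $Q(x^*)$, $x^* \in E_+ \cap \mathcal{P}$, all with common flux subspace $\widetilde{S}$. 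By construction, the number of LP sets (equivalently, of reference points) is $|E_+ \cap \mathcal{P}|$.

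Next I would identify the coset intersection count $\mu$ with this same number. Applying Lemma \ref{unique:vector:subspace}(i) with $\mathcal{S} = \widetilde{S}$, each LP set $Q(x^*) = \{ x \in \mathbb{R}^m_{>0} \mid \log(x) - \log(x^*) \in \widetilde{S}^\perp \}$ meets every flux class (positive coset of $\widetilde{S}$) in exactly one point. Since the LP sets are pairwise disjoint and exhaust $E_+$, any flux class $Q$ satisfies $|E_+ \cap Q| = (\text{number of LP sets}) = |E_+ \cap \mathcal{P}|$. By the definition of a poly-PLP system this is precisely $\mu$, so $\mu = |E_+ \cap \mathcal{P}|$, and hence multi-PLP ($\mu \ge 2$) is equivalent to $|E_+ \cap \mathcal{P}| \ge 2$.

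It then remains to read off multi-stationarity. By the Proposition following Theorem \ref{positive:stoich:class}, the cardinality $|E_+ \cap \mathcal{P}'|$ is independent of the positive stoichiometric class $\mathcal{P}'$ (there is a bijection between any two such intersections, with equal cardinalities in the finite case). Therefore the system carries two distinct stoichiometrically compatible positive equilibria --- i.e. is multi-stationary --- if and only if $|E_+ \cap \mathcal{P}| \ge 2$. Chaining the two equivalences yields multi-PLP $\iff$ multi-stationary, uniformly whether the common count is finite or infinite.

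The main obstacle is conceptual rather than computational: reconciling the two distinct counting schemes. Multi-PLP counts positive equilibria within a single flux class, a coset of the kinetic order subspace $\widetilde{S}$, whereas multi-stationarity counts them within a stoichiometric class, a coset of $S$; these subspaces need not coincide. The bridge is the log-parametrization duality of Lemma \ref{unique:vector:subspace}(i), which forces each LP set to contribute exactly one equilibrium to each flux class, collapsing both counts to the number of LP sets. Once this duality is in place the equivalence is immediate.
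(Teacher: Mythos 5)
Your proposal is correct and follows essentially the same route as the paper's proof: both use the decomposition from Theorem \ref{positive:stoich:class} to identify the coset intersection count $\mu$ with $|E_+ \cap \mathcal{P}|$ via the bijection between reference points in $E_+ \cap \mathcal{P}$ and the pairwise disjoint LP sets, each of which meets every flux class exactly once. Your version is somewhat more explicit than the paper's---spelling out the role of Lemma \ref{unique:vector:subspace}(i) in the flux-class count and invoking the proposition on cross-stoichiometric-class bijections to make the multistationarity reading uniform---but these are details the paper's argument relies on implicitly, not a different method.
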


\begin{proof}
For a given stoichiometric class $\mathcal{P}$ and $x^{*,i} \in E_+ \cap \mathcal{P}$,  we have a map $\{x^{*,i}\} \to \{ Q(x^{*,i})\}$. Clearly, the map is surjective. According to Theorem \ref{positive:stoich:class}, in particular the last statement explaining the ``star union'', it is also injective.
Hence, when finite, $|E_+ \cap \mathcal{P}| 
= | \{ Q(x^{*,i})\}|=\mu$. As remarked before, since the $Q(x^{*,i})$ are LP sets with flux subspace $\widetilde{S}$, $|E_+ \cap Q| = \mu$. This establishes the equivalence of multi-PLP and multi-stationarity for cycle terminal PL-FSK systems with ILC.
\end{proof}

\begin{remark}
The Multi-stationarity Algorithm for Power Law Kinetic systems (MSA-PLK) in \cite{HEMD2021} of Hernandez et al. (2019) can hence be used to check for the multi-PLP property. If the union is finite, $\mu = |E_+ \cap \mathcal{P}|$ if the intersection is non-empty. This would have been necessary for Example \ref{ex:poly:PLP} if the previous computations from Jose et al. had not been available.
\end{remark}

\begin{corollary}
Any complex balanced, monostationary PL-FSK system with ILC is absolutely complex balanced.
\end{corollary}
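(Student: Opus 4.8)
The plan is to chain together the three structural results already established, after first observing that the stated hypotheses secretly force the network to be cycle terminal. First I would record the two consequences of complex balancedness. Since $Z_+(\mathcal{N},K) \ne \varnothing$ and $Z_+(\mathcal{N},K) \subseteq E_+(\mathcal{N},K)$, we get $E_+(\mathcal{N},K) \ne \varnothing$ for free. More importantly, the existence of a complex balanced equilibrium forces weak reversibility: at such a point the rate vector $K(x)$ has all strictly positive entries (power law rate constants are positive and every support condition is met on $\mathbb{R}^m_{>0}$), so $I_a K(x) = 0$ exhibits a strictly positive circulation on the reaction graph, which is possible only when every linkage class is strongly connected. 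Weak reversibility then yields $n_r = n$, i.e., the system is cycle terminal.

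With cycle terminality secured, the system is a cycle terminal PL-FSK system with ILC and $E_+ \ne \varnothing$, so Theorem \ref{positive:stoich:class} applies and gives that $(\mathcal{N},K)$ is poly-PLP with flux subspace $P_E = \widetilde{S}$. Next I would translate the monostationarity hypothesis into this language: monostationary means the system is not multi-stationary, so by the preceding Proposition (multi-PLP $\Longleftrightarrow$ multi-stationary for cycle terminal PL-FSK systems with ILC and non-empty $E_+$) the system is not multi-PLP. Being poly-PLP but not multi-PLP, it is a PLP system, i.e., $\mu = 1$.

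Finally I would invoke the earlier Proposition on complex balanced PL-RDK systems. Since PL-FSK $\subseteq$ PL-RDK, our system is a complex balanced PL-RDK system that is poly-PLP with flux subspace $\widetilde{S}$; that Proposition states non-ACB $\Longleftrightarrow$ multi-PLP, equivalently ACB $\Longleftrightarrow$ PLP. Having shown the system is PLP, I conclude it is absolutely complex balanced, which is exactly the claim.

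The one step needing genuine care, rather than bookkeeping, is the derivation of cycle terminality from complex balancedness: one must check that the classical ``complex balanced implies weakly reversible'' argument — a strictly positive conservative flux forces each linkage class to be strongly connected — still goes through for power law kinetics. It does, precisely because $K_q(x) > 0$ for every $x \in \mathbb{R}^m_{>0}$, so no arc can carry zero flux. Everything after that is a direct composition of the three cited results, so I expect the cycle terminal reduction to be the only substantive point.
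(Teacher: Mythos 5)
Your proof is correct and follows essentially the same route as the paper's own (very terse) proof: poly-PLP structure with flux subspace $\widetilde{S}$ from Theorem \ref{positive:stoich:class}, monostationarity ruling out multi-PLP via the equivalence with multi-stationarity, and then the coincidence of the PLP and CLP structures with flux subspace $\widetilde{S}$ (Theorem 4 of \cite{JTAM2021}, equivalently the Proposition in Section \ref{results4}) yielding ACB. Your one genuine addition is the explicit derivation of cycle terminality from complex balancedness (a strictly positive circulation in $\ker I_a$ forces weak reversibility, hence $n_r = n$) --- the paper uses this fact silently, since both Theorem \ref{positive:stoich:class} and the multi-PLP/multi-stationarity proposition require the cycle terminal hypothesis, so making that reduction explicit, and checking it survives for power law kinetics, is a real improvement in rigor rather than a detour.
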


\begin{proof}
Again, by Proposition 4 of \cite{MURE2014} it is a CLP system with flux subspace $\widetilde{S}$ and monostationarity implies that it is PLP with the same flux subspace. By Theorem 4 of \cite{JTAM2021}, it is absolutely complex balanced.
\end{proof}

\section{Absolute concentration robustness in poly-PLP systems}
\label{results3}
The concept of absolute concentration robustness (ACR) in a network's species was introduced by G. Shinar and M. Feinberg in a well-known paper in the journal Science in 2010 \cite{ShinarFeinberg2010}. ACR denotes the invariance of a species' concentration at all positive equilibria of the system.  Shinar and Feinberg also provided a remarkable sufficient condition inspired by experimental observations in subsystems of {\it{E. coli}} for deficiency one mass action systems. This condition was extended to power law kinetic systems of low deficiency (i.e., $\delta$ = 0 or 1) \cite{FLRM2018,FLRM2020}, subsets of poly-PL kinetic systems \cite{LLMM2021}, and Hill-type kinetic systems \cite{HEME2021}. Independent decompositions allowed the extension of the Shinar-Feinberg criterion to larger systems and those with higher deficiency.

In this Section, we first formulate a general necessary and sufficient condition, called an ACR species hyperplane criterion, for ACR in a species for any kinetic system.  We then show that when applied to the systems we have studied in the previous sections, it leads to a relatively simple procedure for determining ACR in the species of the systems.

\subsection{A general species hyperplane criterion for ACR in kinetic systems}
Mathematically, the property of ACR in a species $S$ is equivalent to the statement that all positive equilibria of a kinetic system lie in a hyperplane $x_S = c$, with a positive constant $c$ (this is the so-called ``ACR hyperplane''). Clearly, this is equivalent to the statement that for any two equilibria $x^*$ and $x^{**}$, $x^* - x^{**}$ lies in the hyperplane $x_S = 0$.


Let $(\mathcal{N}, K)$ be a kinetic system, with $\mathcal{N} = (\mathcal{S, C, R})$ and $m$ is the number of species.

\begin{definition}
For any species $S$, the $(m-1)$-dimensional subspace
$$H_S := \{ x \in \mathbb{R}^{\mathcal{S}}|x_S = 0 \}$$
is called the species hyperplane of $S$.
\end{definition}

For $U$ containing $\mathbb{R}_{>0}$ , let $\phi: U \to \mathbb{R}$ be an injective map, i.e., $\phi: U \to {\rm{Im \ }} \phi$ is a bijection.
By component-wise application ($m$ times), we obtain a bijection $U^\mathcal{S} \to \mathbb{R}^\mathcal{S}$, which we also denote with $\phi$.
We formulate our concepts for any subset $Y$ of $E_+(\mathcal{N}, K)$, although we are mainly interested in $Y =  E_+(\mathcal{N}, K)$.

\begin{definition}
For a subset $Y$ of $E_+(\mathcal{N}, K)$, the set  $$\Delta_{\phi}Y := \{ \phi(x) - \phi(x') | x, x' \in Y\}$$ is called the difference set of $\phi$-transformed equilibria in $Y$, and its span
$\left\langle {\Delta_{\phi}Y} \right\rangle$
the difference space of $\phi$-transformed equilibria in $Y$.
\end{definition}

Recall that a system has concentration robustness in a species $S$ over a subset $Y$ of positive equilibria if the concentration value for $S$ is invariant over all equilibria in $Y$. Most studied has been absolute concentration robustness where $Y = E_+(\mathcal{N}, K)$, and balanced concentration robustness when $Y = Z_+(\mathcal{N}, K)$.

The following Proposition formulates the general species hyperplane criterion for concentration robustness for a subset of positive equilibria $Y$:
 
\begin{proposition} Let $(\mathcal{N}, K)$ be a kinetic system and $U$ a superset of $\mathbb{R}_{>0}$.
\begin{enumerate}
\item[i.] $(\mathcal{N}, K)$ has concentration robustness in a species $S$ over $Y$ if and only if there is an injective map an injective map $U \to \mathbb{R}$ such that  $\left\langle {\Delta_{\phi}Y} \right\rangle$ is contained in $H_S$.
\item[ii.] If $m_{\text{ACR}}$ $(m_{\text{BCR}})$
is the number of species with ACR (BCR),
then 
$$m_{\text{ACR}} \le m - \dim \left\langle {\Delta_{\phi}E_+} \right\rangle (m_{\text{BCR}} \le m - \dim \left\langle {\Delta_{\phi}Z_+} \right\rangle).$$
\end{enumerate}
\end{proposition}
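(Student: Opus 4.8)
The plan is to prove both statements by reducing "concentration robustness in $S$" to a purely linear-algebraic condition on the difference space $\langle \Delta_\phi Y\rangle$, exploiting the fact that $\phi$ is an injective, hence invertible-onto-its-image, coordinate transformation. The guiding observation (already flagged in the text preceding the Proposition) is that ACR in $S$ means all positive equilibria satisfy $x_S = c$, equivalently $x^* - x^{**} \in H_S$ for any two equilibria $x^*, x^{**}$. The role of $\phi$ is to translate this additive condition on the raw concentrations into an additive condition on the $\phi$-transformed coordinates, where it becomes the statement $\langle \Delta_\phi Y\rangle \subseteq H_S$.

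For statement (i), I would argue both directions. For the forward direction, suppose the system has concentration robustness in $S$ over $Y$, so $x_S = c$ for every $x \in Y$. I would take $\phi = \mathrm{id}$ (or any injective map fixing the $S$-coordinate appropriately); then for any $x, x' \in Y$, the $S$-component of $\phi(x) - \phi(x')$ is $\phi(x_S) - \phi(x'_S) = \phi(c) - \phi(c) = 0$, so every generator of $\Delta_\phi Y$ lies in $H_S$, whence $\langle \Delta_\phi Y\rangle \subseteq H_S$. For the converse, suppose there is an injective $\phi: U \to \mathbb{R}$ with $\langle \Delta_\phi Y\rangle \subseteq H_S$. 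Then for any $x, x' \in Y$ the vector $\phi(x) - \phi(x')$ has vanishing $S$-component, i.e. $\phi(x_S) = \phi(x'_S)$; since $\phi$ is injective this forces $x_S = x'_S$. Thus the $S$-concentration is constant across $Y$, which is exactly concentration robustness in $S$.

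For statement (ii), I would fix any single injective $\phi$ and observe that each species $S$ with ACR contributes, via part (i), the containment $\langle \Delta_\phi E_+\rangle \subseteq H_S$. Intersecting over all $m_{\mathrm{ACR}}$ such species gives $\langle \Delta_\phi E_+\rangle \subseteq \bigcap_{S \text{ has ACR}} H_S$. Since each $H_S$ is a coordinate hyperplane of codimension one and the species are distinct, this intersection has codimension exactly $m_{\mathrm{ACR}}$, hence dimension $m - m_{\mathrm{ACR}}$. Therefore $\dim\langle \Delta_\phi E_+\rangle \le m - m_{\mathrm{ACR}}$, which rearranges to the claimed bound; the BCR case is identical with $E_+$ replaced by $Z_+$.

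**The main obstacle** is subtle rather than computational: in part (i) the forward direction asserts the existence of \emph{some} injective $\phi$, while the converse starts from a \emph{given} $\phi$, so I must be careful that the two quantifications are compatible and that the convenient choice $\phi = \mathrm{id}$ is legitimate under the standing hypothesis that $U \supseteq \mathbb{R}_{>0}$ and $\phi: U \to \mathbb{R}$ is injective. The key point to verify cleanly is that injectivity of $\phi$ is exactly what lets me pass from $\phi(x_S) = \phi(x'_S)$ back to $x_S = x'_S$; without injectivity the converse fails. Once this equivalence is nailed down, statement (ii) is a routine codimension count and requires no further machinery.
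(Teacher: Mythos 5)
Your proposal is correct and follows essentially the same route as the paper: both reduce concentration robustness in $S$ to the equivalence $x_S = x'_S \iff \phi(x_S) = \phi(x'_S)$ via injectivity of $\phi$, and both obtain (ii) by intersecting the coordinate hyperplanes $H_S$ over the ACR species and counting codimension. Your version merely spells out the two directions of (i) and the quantifier over $\phi$ more explicitly than the paper's single chain of equivalences.
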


\begin{proof}
Statement (i) follows directly from the definitions, i.e., concentration robustness in $S$ over $Y$ $\iff$ for all $x, x' \in Y$, $x_S = x'_S \iff \phi(x_S) = \phi(x'_S)$ for any bijection $\phi$
(e.g., $\phi = \log$) $\iff$ $\phi (x_S) - \phi (x'_S) = 0 \iff \left\langle {\Delta_{\phi}Y} \right\rangle$ is contained in $H_S$.
The claims in (ii) follow from (i), since $\left\langle {\Delta_{\phi}E_+} \right\rangle$ is contained in $\bigcap H_S$. Hence,
$\dim \left\langle {\Delta_{\phi}E_+} \right\rangle \le m - m_{\text{ACR}}$, and analogously for $Z_+$ and BCR.
\end{proof}

\begin{corollary}
If $(\mathcal{N}, K)$ has ACR in at least one species, then $\left\langle {\Delta_{\phi}E_+} \right\rangle$ does not contain any positive vector.
\label{cor:posvector:ACR}
\end{corollary}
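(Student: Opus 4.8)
The plan is to read the conclusion directly off the species hyperplane criterion established in the preceding Proposition, so the argument is essentially a definitional unpacking. By hypothesis $(\mathcal{N}, K)$ has ACR in some species $S$; taking $Y = E_+$ in statement (i) of that Proposition, this is equivalent to the existence of an injective map $\phi: U \to \mathbb{R}$ for which the difference space $\left\langle {\Delta_{\phi}E_+} \right\rangle$ is contained in the species hyperplane $H_S = \{ x \in \mathbb{R}^{\mathcal{S}} \mid x_S = 0 \}$. The first step is therefore simply to invoke this equivalence and fix such a $\phi$ together with the witnessing species $S$.

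The second step is the elementary observation that $H_S$ contains no strictly positive vector: any $v \in \mathbb{R}^{\mathcal{S}}_{>0}$ has $v_S > 0$, hence $v \notin H_S$. Combining the two steps, since $\left\langle {\Delta_{\phi}E_+} \right\rangle \subseteq H_S$ and $H_S$ excludes all positive vectors, the subspace $\left\langle {\Delta_{\phi}E_+} \right\rangle$ can contain no positive vector, which is exactly the asserted claim.

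I do not expect any genuine obstacle here: the content reduces to the Proposition plus the trivial fact that a positive coordinate cannot vanish. The only point requiring mild care is the quantifier on $\phi$ --- the Proposition furnishes the \emph{existence} of one suitable injective map, and that single map is all the argument uses, so there is no need to range over every choice of $\phi$. One might also note, for context, that the contrapositive is the form useful in applications: exhibiting a positive vector inside $\left\langle {\Delta_{\phi}E_+} \right\rangle$ rules out ACR in every species simultaneously, which dovetails with the dimension bound of statement (ii).
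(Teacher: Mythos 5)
Your two-step skeleton --- containment of $\left\langle \Delta_{\phi}E_+ \right\rangle$ in the species hyperplane $H_S$, plus the trivial fact that no strictly positive vector can have a vanishing $S$-coordinate --- is exactly the intended derivation; the paper states this corollary without proof precisely because it falls out of the Proposition this way. So the mathematical core of your argument is fine.

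However, your side remark about the quantifier on $\phi$ gets the logic backwards, and this is the one place where the proof as written has a real (if easily repaired) defect. You argue that it suffices to use the \emph{single} injective map whose existence Proposition (i) asserts. But the corollary is invoked later in the paper in contrapositive form with a \emph{specific, pre-chosen} map: in the poly-PLP setting one computes $\left\langle \Delta_{\log}E_+ \right\rangle = P_E^{\perp} + (P_E^{*})^{\perp}$, and in the Horn--Jackson example one exhibits the positive vector $(1,1)^\top \in S^\perp \subseteq \left\langle \Delta_{\log}E_+ \right\rangle$ to conclude that no species has ACR. For that use, the conclusion must hold for the map $\phi = \log$ one is actually computing with --- equivalently, for every injective $\phi$. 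Under your existential reading, knowing that \emph{some} unspecified $\phi$ yields a difference space without positive vectors says nothing about the difference space of the $\phi$ at hand, and the contrapositive application collapses. The repair is immediate and is already implicit in the Proposition's proof: ACR in $S$ means $x_S = x'_S$ for all $x, x' \in E_+$, and \emph{any} injective $\phi$, applied componentwise, preserves this equality, so $\phi(x)_S - \phi(x')_S = 0$ and hence $\left\langle \Delta_{\phi}E_+ \right\rangle \subseteq H_S$ for every choice of $\phi$, not merely one. With that universal containment in hand, your second step finishes the proof unchanged.
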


\begin{remark}
Note that any injective map $U^\mathcal{S} \to \mathbb{R}^\mathcal{S}$ leads  to an ACR hyperplane criterion. 
The key question for the choice of the map $\phi$ is the tractability of computing $\left\langle {\Delta_{\phi}E_+} \right\rangle$.
\end{remark}

\begin{example}
For monostationary systems \cite{FAME2022}, the choice for $U = \mathbb{R}$ and $\phi = {\rm{id}}$, so that ${\rm{Im \ }} \phi = \mathbb{R}$.
\end{example}


\subsection{Concentration robustness in poly-PLP systems}
We now apply the general species hyperplane criterion to the logarithm function.

\begin{definition}
The subspace ${P_E}^{*} = \left\langle \log(x_i^{*}) - \log(x_j^{*}) \right\rangle ^{\perp}$ is called the {\bf{reference equilibria flux subspace}}.
\end{definition}

The ACR species hyperplane criterion for poly-PLP systems is the following:

\begin{theorem}
Let $(\mathcal{N}, K)$ be a poly-PLP system with flux space $P_E$, reference equilibria $\{{x^{*}_i}\}$ and reference equilibria flux subspace ${P_E}^{*}$.  Then 
\begin{enumerate}
	\item[i.] it has ACR in a species $S$ if and only if 
	${P_E}^{\perp} + ({P_E}^{*})^{\perp}=({P_E} \cap {P_E}^{*})^{\perp}$ is contained in the hyperplane $\{x |x_S = 0\}$, and
	\item[ii.] if $m_{\text{ACR}}$ is the number of species with ACR property, then $$m_{\text{ACR}} \le \dim ({P_E} \cap {P_E}^{*}) \le \min\{\dim({P_E}),\dim({P_E}^{*})\}.$$
\end{enumerate}
\label{ACR:hyperplane:criterion:multiPLP}
\end{theorem}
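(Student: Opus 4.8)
The plan is to specialize the general species hyperplane criterion (the preceding Proposition) to the logarithm map $\phi = \log$, which is a bijection $\mathbb{R}_{>0} \to \mathbb{R}$ and hence an admissible choice of injective map. By that Proposition, $(\mathcal{N}, K)$ has ACR in a species $S$ if and only if the difference space $\left\langle \Delta_{\log} E_+ \right\rangle$ is contained in the hyperplane $H_S = \{x \mid x_S = 0\}$. Everything then reduces to computing $\left\langle \Delta_{\log} E_+ \right\rangle$ explicitly from the poly-PLP structure, and this identification is the heart of the argument.

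To carry out the computation, I would write $E_+$ as the disjoint union of the LP sets $E(P_E, x_i^*)$, $i = 1,\ldots,\mu$. Any $x \in E_+$ lies in some class, so $\log(x) = \log(x_i^*) + u$ with $u \in {P_E}^{\perp}$; likewise $\log(x') = \log(x_j^*) + v$ with $v \in {P_E}^{\perp}$. Thus every generator of the difference space has the form $\log(x) - \log(x') = (\log(x_i^*) - \log(x_j^*)) + (u - v)$, which lies in $({P_E}^{*})^{\perp} + {P_E}^{\perp}$, using the double-perp identity $({P_E}^{*})^{\perp} = \left\langle \log(x_i^*) - \log(x_j^*) \right\rangle$ that comes straight from the definition of ${P_E}^{*}$. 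For the reverse inclusion I would note two things: within a single class $E(P_E, x_i^*)$ the differences $u - v$ range over all of ${P_E}^{\perp}$ (because $\log$ is onto $\mathbb{R}^{\mathcal{S}}$ and the class is exactly $\{x \mid \log(x) - \log(x_i^*) \in {P_E}^{\perp}\}$), while taking the reference points $x_i^*$ themselves, which are genuine elements of $E_+$, yields the spanning differences $\log(x_i^*) - \log(x_j^*)$ of $({P_E}^{*})^{\perp}$. Hence $\left\langle \Delta_{\log} E_+ \right\rangle = {P_E}^{\perp} + ({P_E}^{*})^{\perp}$.

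With this in hand, the standard finite-dimensional identity $(P_E \cap {P_E}^{*})^{\perp} = {P_E}^{\perp} + ({P_E}^{*})^{\perp}$ rewrites the difference space as in the statement, and part (i) is then immediate from the criterion. For part (ii) I would invoke the dimension bound of the general Proposition, $m_{\text{ACR}} \le m - \dim \left\langle \Delta_{\log} E_+ \right\rangle$, and substitute $\dim \left\langle \Delta_{\log} E_+ \right\rangle = \dim (P_E \cap {P_E}^{*})^{\perp} = m - \dim(P_E \cap {P_E}^{*})$, giving $m_{\text{ACR}} \le \dim(P_E \cap {P_E}^{*})$; the final inequality $\dim(P_E \cap {P_E}^{*}) \le \min\{\dim P_E, \dim {P_E}^{*}\}$ holds because the intersection sits inside each factor.

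The only genuinely delicate step is the reverse containment in the computation of $\left\langle \Delta_{\log} E_+ \right\rangle$: one must confirm that the intra-class variation really sweeps out all of ${P_E}^{\perp}$ and that the reference points are themselves equilibria in $E_+$, so that the cross-class generators of $({P_E}^{*})^{\perp}$ are actually realized. The upper containment and both linear-algebra identities are routine, so I expect the argument to hinge entirely on verifying that these two families of log-differences are attained.
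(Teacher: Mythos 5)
Your proposal is correct and takes essentially the same route as the paper: specialize the general species hyperplane criterion to $\phi = \log$, establish the identification $\left\langle \Delta_{\log}E_+ \right\rangle = {P_E}^{\perp} + ({P_E}^{*})^{\perp}$ using the poly-PLP structure, and conclude with the standard orthogonal-complement and dimension identities. The only (immaterial) difference lies in the reverse inclusion, where the paper realizes each vector $(\log(x_i^{*}) - \log(x_j^{*})) + p$ as a single log-difference by splitting $p$ in half between the two classes, whereas you realize ${P_E}^{\perp}$ via intra-class differences and the generators of $({P_E}^{*})^{\perp}$ via differences of the reference equilibria and then pass to spans; both are valid since the difference space is by definition a span.
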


\begin{proof}
\begin{itemize}
	\item[i.] We only need to show that $\left\langle \Delta_{\log}E_+\right\rangle={P_E}^{\perp} + ({P_E}^{*})^{\perp}$.
	We denote the set $\{x \in \mathbb{R}^{\mathcal{S}}_{>0} | \log (x) - \log (x_i^{*}) \in {P_E}^{\perp}\}$ with $E_i$.
	If $x \in E_i$ and $x' \in E_j$, then $\log (x) - \log (x') = (\log (x_i^{*}) - \log (x_j^{*})) + (p_i - p_j) \in ({P_E}^{*})^{\perp} + {P_E}^{\perp}$.
	Hence, $\left\langle \Delta_{\log}E_+\right\rangle \subseteq {P_E}^{\perp} + ({P_E}^{*})^{\perp}$.
	Conversely, if $p \in {P_E}^\perp$, for each pair $(i, j)$, since the system is poly-PLP, there exists an $x_i \in E_i$ such that $\log (x_i) = \log(x_i^{*}) + \dfrac{p}{2}$, and analogously an $x_j \in E_j$ such $\log (x_j) = \log(x_j^{*}) - \dfrac{p}{2}$.
	Hence, $(\log(x_i^*) - \log(x_i^*)) + p = \log (x_i) - \log (x_j)$, implying
	${P_E}^{\perp} + ({P_E}^{*})^{\perp} \subseteq \left\langle \Delta_{\log}E_+\right\rangle$.
	Applying the general species hyperplane criterion delivers the claims. 
	 
	\item[ii.] We have $({P_E} \cap {P_E}^{*})^{\perp}$ is contained in $\bigcap\limits_S {\left\{ x|{{x_S} = 0} \right\}}$ for all ACR species, so that $m - \dim ({P_E} \cap {P_E}^{*}) \le m - m_{\text{ACR}}$, which leads to the claim.
\end{itemize}
\end{proof}

Theorem \ref{ACR:hyperplane:criterion:multiPLP} suggests that a way to determine 
ACR in a poly-PLP system is to compute bases of 
${P_E}^{\perp}$ and $({P_E}^{*})^{\perp}$,
respectively, and then observe which species coordinates are zero in all basis vectors of the two spaces. These are precisely the ACR species.

\begin{remark}
Note that if $\mu = |E_+ \cap Q|$ is finite, then, for any species $S$,  $$({P_E}^{*})^\perp \subseteq H_S \iff \left\{ \log \dfrac{x^{*}_j}{x^{*}_1} | j=2,\ldots,\mu\right\} \subseteq H_S.$$
\end{remark}

\begin{example}
If $(\mathcal{N}, K)$ is a (mono-)PLP system, then $({P_E}^{*})^\perp = 0$, and we recover the results of \cite{LLMM2021}, including a simple computational procedure for the species hyperplane criterion. In that paper, the bijection $L_{x^{*}}(x) := \log (x) - \log (x^{*})$ was used to derive the results.
\end{example}

\begin{example}
We now refer to Fig. \ref{fig:HornJacksonExample}.
The mass action system was first studied by F. Horn and R. Jackson \cite{HORN1972:GMAK}. Note that $\varepsilon >0$ is a parameter that depicts rate constant.
\begin{figure}
	\centering
	\includegraphics[width=7cm,height=4cm,keepaspectratio]{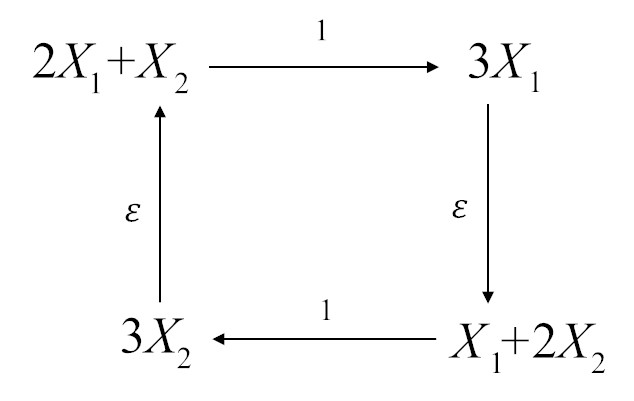}
	\caption{A mass action system first studied by F. Horn and R. Jackson from \cite{HORN1972:GMAK}}
	\label{fig:HornJacksonExample}
\end{figure}

The stoichiometric subspace $S = \left\langle {{{\left( {1, - 1} \right)}^\top}} \right\rangle $, and hence,
$\delta = 4 - 1 - 1 = 2$. Since $\ell = 1$, the linkage class decomposition is trivially independent, so it is poly-PLP. In \cite{boros:thesis}, B. Boros computed the positive equilibria sets and showed that the system is tri-PLP if $0 < \varepsilon < \dfrac{1}{6}$ and mono-PLP if $\dfrac{1}{6} < \varepsilon$. However, for the ACR analysis, we do not need the equilibria computations because the network is conservative (the vector $(1,1)^\top$ is clearly in $S^\perp$), so that Corollary \ref{cor:posvector:ACR} implies that it does not have ACR in any species.
\end{example}
	
\section{Summary and Outlook}
\label{summary}
	The summary of our work and an outlook for future direction are given as follows:
\begin{enumerate}
	\item We establish that any PL-FSK system with non-empty set of equilibria such that the underlying network $\mathcal{N}$ that is cycle terminal (each complex is a reactant complex) and has independent linkage classes (ILC), is a poly-PLP  system, i.e., its set of positive equilibria is the disjoint union of log-parametrized sets with kinetic order subspace, introduced by M\"uller and Regensburger, as the flux subspace. This result extends the results of Boros in his PhD thesis on mass action systems with ILC.
	The key insight for the extension is that factor span surjectivity induces an isomorphic digraph structure on the kinetic complexes, and hence a bijection between their decompositions. This result also completes, for ILC networks, the structural analysis of the original complex balanced GMAS by M\"uller and Regensburger.
	\item We identify a large set of PL-RDK systems for which the non-emptiness of its set of equilibria is a necessary and sufficient condition for the non-emptiness of each set of positive equilibria of the subsystems with underlying networks as the linkage classes, and hence extending the result of Boros on mass action systems with ILC.
	\item We show that for complex balanced PL-RDK systems which are poly-PLP with the kinetic order subspace as the flux subspace, the multi-PLP systems are precisely those which are not absolutely complex balanced (ACB). Additionally, we identify the set of monostationary PL-FSK systems as a new class of PL-RDK systems to which the Horn-Jackson result on ACB extends. Moreover, we apply our results for cycle terminal PL-FSK systems to show that for these systems, multi-PLP is equivalent to multi-stationarity.
	\item We use our results to study absolute concentration robustness (ACR) by obtaining a species hyperplane containment criterion to determine ACR in the system species.
	\item One can deal with extensions of results for poly-PL kinetics (non-negative linear combination of power law kinetics) and Hill-type.
\end{enumerate}

\section*{Acknowledgement}
This work was supported by the Institute for Basic Science in Daejeon, Republic of Korea with funding information IBS-R029-C3.



\appendix
\section{List of abbreviations}
\begin{tabular}{ll}
\noalign{\smallskip}\hline\noalign{\smallskip}
Abbreviation& Meaning \\
\noalign{\smallskip}\hline\noalign{\smallskip}
CLP& complex balanced equilibria log parametrized\\
CRN& chemical reaction network\\
GMAS& generalized mass action system\\
ILC& independent linkage classes\\
PL-FSK& power law with factor span surjective kinetics\\
PLK& power law kinetics\\
PL-NDK& power law with non-reactant-determined kinetics\\
PL-NIK& power law with non-inhibitory kinetics\\
PLP& positive equilibria log parametrized\\
PL-RDK& power law with reactant-determined kinetics\\
PL-RLK& power law with reactant set linear independent kinetics\\
PL-TIK& power law with $\widehat{T}$-rank maximal kinetics\\
\noalign{\smallskip}\hline
\end{tabular}

\end{document}